\DeclareRobustCommand{\qed}{%
  \ifmmode 
  \else \leavevmode\unskip\penalty9999 \hbox{}\nobreak\hfill
  \fi
  \quad\hbox{\qedsymbol}}
\newcommand{\openbox}{\leavevmode
  \hbox to.77778em{%
  \hfil\vrule
  \vbox to.675em{\hrule width.6em\vfil\hrule}%
  \vrule\hfil}}
\newcommand{\qedsymbol}{\openbox}
\newenvironment{proof}[1][\proofname]{\par
  \normalfont
  \topsep6\p@\@plus6\p@ \trivlist
  \item[\hskip\labelsep\itshape
    #1.]\ignorespaces
}{%
  \qed\endtrivlist
}
\newcommand{\proofname}{Proof}
\numberwithin{equation}{section}
\def \b1{\bf{1}}
\def \I{\mathbb{I}}
\def \N{\mathbb{N}}
\def \R{\mathbb{R}}
\def \E{\mathbb{E}}
\def \F{\mathbb{F}}
\def \P{\mathbb{P}}
\def \Q{\mathbb{Q}}
\def \bd{\mathbb{d}}
\def \bd{\boldsymbol{d}} 
\def \bW{\boldsymbol{W}}
\def \mfs{\mathfrak{\sigma}} 
\def \mfb{\mathfrak{b}}
\def \d{\mathrm{d}}
\def \bomu{\boldsymbol{\mu}}
\def \bonu{\boldsymbol{\nu}}
\def \boeta{\boldsymbol{\eta}}
\def \bodelta{\boldsymbol{\delta}}
\def\esssup_#1{\underset{#1}{\mathrm{ess\,sup\, }}}
\def\argmin_#1{\underset{#1}{\mathrm{argmin\, }}}
\def\argmax_#1{\underset{#1}{\mathrm{argmax\, }}}
\def \Bc{{\cal B}}
\def \Cc{{\cal C}}
\def \Ec{{\cal E}}
\def \Fc{{\cal F}}
\def \Gc{{\cal G}}
\def \Ic{{\cal I}}
\def \Lc{{\cal L}}
\def \Pc{{\cal P}}
\def \Mc{{\cal M}}
\def \Nc{{\cal N}}
\def \Wc{{\cal W}}
\def\boX{{\boldsymbol X}}
\def\bolx{{\boldsymbol x}}
\def\bomu{{\boldsymbol \mu}}
\def\bozeta{{\boldsymbol \zeta}}
\def \mfb{\mathfrak{b}}
\def \d{\mathrm{d}} 
\def \mfs{\mathfrak{s}}
\def\beqs{\begin{eqnarray*}}
\def\enqs{\end{eqnarray*}}
\def\beq{\begin{eqnarray}}
\def\enq{\end{eqnarray}}
\def\blu#1{{\color{blue}#1}}
\newtheorem{Theorem}{Theorem}[section]
\newtheorem{Definition}{Definition}[section]
\newtheorem{Proposition}{Proposition}[section]
\newtheorem{Assumption}{Assumption}[section]
\newtheorem{Lemma}{Lemma}[section]
\newtheorem{Remark}{Remark}[section]
\numberwithin{equation}{section}
\title{
Nonlinear Graphon mean-field systems
}
\author{Fabio Coppini\footnote{Utrecht University, The Netherlands \sf f.coppini at uu.nl}  \and  Anna De Crescenzo\footnote{LPSM, Universit\'e Paris Cit\'e and Sorbonne University, \sf decrescenzo at lpsm.paris} \and  
Huy\^en Pham\footnote{LPSM, Universit\'e Paris Cit\'e and Sorbonne University, \sf pham at lpsm.paris. 
The work of this author is  partially supported by the BNP-PAR Chair ``Futures of Quantitative Finance", and the 
Chair Finance \& Sustainable Development / the FiME Lab (Institut Europlace de Finance)}}
\date{}
\begin{document}

\maketitle

\begin{abstract}
    We address a system of weakly interacting particles where the heterogenous connections among the particles are described by a graph sequence and the number of particles grows to infinity. Our results extend the existing law of large numbers and propagation of chaos results to the case where the interaction between one particle and its neighbors is expressed as a nonlinear function of the local empirical measure. In the limit of the number of particles which tends to infinity, if the graph sequence converges to a graphon, then we show that the limit system is described by an infinite collection of processes and can be seen as a process in a suitable $L^2$ space constructed via a Fubini extension. The proof is built on decoupling techniques and careful estimates of the Wasserstein distance.
\end{abstract}



\vspace{3mm}

\noindent {\bf MSC Classification}: 60J60, 05C80, 65K35.

\vspace{3mm}

\noindent {\bf Key words}: graphons, particle systems, heterogenous interaction, propagation of chaos, Fubini extension.

\section{Introduction}

\paragraph{Motivation for complex heterogenous systems.}
Physicists and mathematicians have always been fascinated by modeling complex systems with applications that include evolutionary biology \cite{arenas_synchronization_2008, agathe-nerine_long-term_2022}, epidemiology \cite{keliger_local-density_2020, delmas_infinite-dimensional_2020}, game theory and controls \cite{delarue_mean_2017, caines_graphon_2020} and economics \cite{parise_graphon_2020}. One way to represent this complexity is through a mixture of nonlinear dynamics and non-trivial connections among the particles. In a finite system of interacting particles, it is natural to introduce a graph object describing how each particle is connected to the others: it suffices to multiply the interaction between two particles by the indicator function of the edge connecting the corresponding vertices in the underlying graph. Even if this mathematical representation is simple, it does not say anything about the macroscopic limit of the population, i.e., is the empirical measure converging as the number of particles goes to infinity? How is the graph structure influencing the dynamics? To address these questions several results have been proposed, they usually focus on the study of the empirical measure, or on the particle trajectories, and mainly prove Propagation of Chaos \cite{delattre_note_2016, bhamidi_weakly_2019}, Law of Large Numbers \cite{oliveira_interacting_2019, coppini_law_2020, bayraktar_graphon_2023, bet_weakly_2023}, Central Limit Theorem \cite{bhamidi_weakly_2019, coppini_central_2022} and Large Deviation Principle \cite{oliveira_interacting_2019, coppini_law_2020, maclaurin_large_2020}. We refer to the literature section below for more references.

While  the classical mean-field systems are studied in great generality with respect to the particle interaction, the existing results on particle systems and graph sequences focus on the linear (or scalar) case, i.e., the interaction between the particles is one to one and the sum of the interactions on a single particle is equivalent to depending linearly on the empirical measure. To the authors' knowledge, the gap between mean-field systems and interacting particles on graph sequences has never been tackled in the literature and demands to be clarified, notably in the application viewpoint where the linear dependence is a strong requirement.
In the case where the particles are influenced by a random noise, the It\^o formula \cite{oksendal_stochastic_2003} is one of the most important tools used to study such systems. Notably, it can be used to derive the partial differential equation, usually called Vlasov or McKeanVlasov equation, satisfied by the empirical measure as the number of particles goes to infinity. For interacting particle systems, it is well known that the limit equation is nonlinear and it is defined on a suitable space containing the probability measures \cite{gartner_mckean-vlasov_1988, coghi_pathwise_2020}: in many instances, as in the case of control problems \cite{cosetal21}  it is important to have an $L^2$-formulation. To the authors' knowledge, in the literature there is no such formulation whenever the limit equation depends on the graph structure: an Hilbertian formulation would benefit the mathematical community as it allows the application of the It\^o formula.

\paragraph{Contributions of this work.}
The aim of this work is twofold: (1) to extend existing Law of Large Numbers (LLN) and a Propagation of Chaos (POC) for interacting particle systems on graphs to the case where the interaction is Lipschitz with respect to the empirical measure; (2) to define an infinite system of nonlinear McKean-Vlasov equations in a suitable $L^2$-space, such that the corresponding non-linear processes are measurable (in some sense to be defined). We remark that, even taken independently, each of these points has not been addressed in the literature: the closest result to (1) is given by \cite{bayraktar_graphon_2023} where the dependence with respect to the empirical measure is linear (see Remark \ref{rem:comparison} for a comparison of the particle systems); for (2), \cite{aurell_stochastic_2022} studies linear-quadratic stochastic differential games and makes use of a Fubini extension to carefully define the system in the infinite particle setting, however the dynamics does not involve probability measures. We refer to the next subsection for a review of existing results.

Our results can be established thanks to the advances already made in the case of graphon particle systems \cite{bayraktar_graphon_2023, bet_weakly_2023}, the ones involving a suitable extension of the Lebesgue measure to address an uncountable family of stochastic processes \cite{aurell_stochastic_2022, sun_exact_2006} and known results on the Wasserstein distance \cite{fournier-guillin, villani_optimal_2009}. The proof of LLN, see Theorem \ref{theoLLN}, and the one of POC, see Theorem \ref{theoPOC}, mimic the classical one given by Sznitman \cite{sznitman_topics_1991} where the finite particle system is paired with a finite sample of a suitable limit system. We recall that this last sample of random variables must have suitable exchangeability properties. In our case, given the underlying graphon structure, the trajectories are still independent of each other, but they are no longer identically distributed. The existence and uniqueness of solutions for the infinite system is obtained with a classical fixed point argument, but now in a suitable $L^2$ space constructed using a Fubini extension, see Definition \ref{d:solution-graphonsysgen} and Proposition \ref{p:existence-uniqueness-graphonsysgen}. The main difficulty coming from the assumption on the dynamics, is that one has to uniformly control all the local empirical measures: by coupling the trajectories with the ones of the infinite system, this translates into having a uniform estimate on the local empirical measures of the infinite system. This is possible by extending a well-known result by Fournier and Guillin \cite{fournier-guillin} on the convergence of empirical measures in the Wasserstein distance to a not identically distributed setting, see Lemma \ref{lemFG} for a precise statement. We are able to recover the same convergence rate, which depends on the dimension of the space, and thus obtain the expected convergence rate in the POC result. We choose to focus on deterministic dense graph sequences converging to graphons \cite{lovasz_large_2012}, instead of more general graph sequences \cite{bayraktar_graphon_2023, jabin_mean-field_2022}. However, we believe that with a bit more work and notational effort, our result can be extended to random graph sequences with diverging average degree that converge to graphon under a suitable renormalisation, we refer to \cite{oliveira_interacting_2020, coppini_law_2020, bayraktar_graphon_2023} for some results applying to dense and not-so-dense graph sequences. We also refrain from investigating singular interactions \cite{lucon_mean_2014, wang_mean-field_2022}, but they may represent a subject for further study.

\paragraph{Existing literature.}
Beginning with the works \cite{delattre_note_2016, bhamidi_weakly_2019}, numerous papers have discussed interacting particle systems on graph sequences, see \cite{coppini_law_2020, oliveira_interacting_2019, lucon_quenched_2020, bayraktar_graphon_2023} this list being in no way exhaustive. It is now well-known that, for a general dense graph sequence, the empirical measure of a system of interacting particles converges to the solution of a partial differential equation, usually called Vlasov or McKean-Vlasov equation, that depends on the graph limit itself, see \cite{coppini_note_2022} for more discussion on the equation and its relation with the graph limit. 
Depending on the assumptions on the interaction, on the initial conditions or on the graph sequence itself, several results have been established. It should be mentioned that in the literature the interaction on a given particle is always assumed to be the sum of the two point particle interactions, i.e., linear with respect to the local empirical measures. When the two-point particle interaction is Lipschitz and the underlying graph sequence is somewhat homogeneous, e.g., Erd\H{o}sh-Rényi random graphs, classical trajectorial estimates \cite{delattre_note_2016, bhamidi_weakly_2019, coppini_law_2020} lead to a Law of Large Numbers and propagation of chaos results; these results show that the limit equation is the same of the classical mean-field case \cite{sznitman_topics_1991, oelschlager_martingale_1984, braun_vlasov_1977}. For particle systems where the underlying graph displays some sort of heterogeneity, several methods have been proposed.

The graphon theory \cite{lovasz_limits_2006, lovasz_large_2012} has found its place in several works \cite{oliveira_interacting_2019,bayraktar_concentration_2022, bayraktar_graphon_2023, bayraktar_stationarity_2022, bet_weakly_2023}, notably in the case of graphon mean field systems \cite{parise_graphon_2020, caines_graphon_2020, lacker_label-state_2023}. 
Today, it represents one of the most cited framework when dense graph sequences are in play. The first article to link the cut-distance, the natural distance of the graphon space (see Section \ref{sec:graphon} below), to interacting particle systems is given by \cite{oliveira_interacting_2019}, where the authors consider spatially extended particles that are interacting through a random graph, this last one being sampled from a graphon. Similar results, although in a different metric concerning the graph convergence, are those \cite{lucon_mean_2014, lucon_quenched_2020} but also the ones about the Kuramoto model in the deterministic setting, see \cite{medvedev_continuum_2018, medvedev_nonlinear_2014, chiba_mean_2019} and the references therein. The works \cite{bayraktar_graphon_2023, bet_weakly_2023} have extended the work \cite{oliveira_interacting_2020} in two important ways: the first one \cite{bayraktar_graphon_2023} considers a large class of particle systems in $\R^d$ and shows that, for a graph sequence converging in cut-distance to a deterministic graphon, a Law of Large Numbers holds for the empirical measure and its limit is characterised by being the solution of an infinite system of partial differential equations; the second one \cite{bet_weakly_2023} restricts the analysis to particle systems on the one-dimensional torus but under weaker assumptions on the graph convergence, which can be in probability and where the limit graphon can be random itself. 
The paper \cite{amicaosul22} shows convergence of interacting mean-field particle system with inhomogeneous interactions to graphon mean-field BSDE, and in \cite{bayetal23b}, the authors prove a propagation of chaos for a system of forward-backward SDE with graphon interaction arising from  mean-field game problem. In both papers, the interaction is of linear form.
For the Central Limit Theorem, we refer to \cite{coppini_central_2022}, where the authors are able to study the global and local fluctuations of the empirical measure in the case of Erd\H{o}s-Rényi random graphs. Other relevant works concern the long time behavior, we refer to \cite{coppini_long_2022, bayraktar_stationarity_2022}. Singular interactions have been studied in \cite{lucon_mean_2014,wang_mean-field_2022}, the latter being  restricted to the case where the graph weights are on the vertices rather than the edges.

Graphon theory comes with some constraints, notably the fact that every vertex has roughly the same order of neighbors but also the symmetry assumed at the level of the limit graph (as a graphon is a symmetric function on the unit square). Two generalisations have been proposed: \cite{jabin_mean-field_2022} proves a Law of Large Numbers under some weak assumption on the degrees of the graph, \cite{kuehn_vlasov_2022} precisely focuses on sequences of directed graphs.

\paragraph{Outline of the paper.} The plan of the paper is organized as follows. In Section \ref{s:definition}, we define the interacting particle systems under consideration and state the main assumption on the particle dynamics. Section \ref{s:l2-formulation} is devoted to the $L^2$-formulation: it recalls the main results concerning the Fubini extension, the main tools and assumptions to work in the graphon setting and finally states the classical result on existence and uniqueness of solutions. The LLN and POC are stated in the Section \ref{s:lln+poc} together with their proofs. An Appendix contains some extra material used throughout the proofs.

\paragraph{Notations.}
We work in $\R^d$ where $d$ is some positive integer. The Euclidian norm of a vector in $\R^d$ is denoted by $\left|\cdot\right|$, and $\cdot$ is the scalar product. The space of probability measure over some metric space $(E, \bd)$ is denoted by $\Pc(E)$, and $\Mc_+(E)$ is the  set of nonnegative finite Borel  measures on $E$. 
When we consider the subspace of probability measures with finite variance, we denote it by $\Pc_2(E)$. Their elements are denoted by Greek letters, e.g., $\mu, \nu, \eta \in \Pc(E)$.

We equip $\Pc_2(E)$ with the 2-Wasserstein distance $\Wc_2$ so that it is a metric space itself. The 2-Wasserstein distance on $\Pc_2(E)$ is defined by, for $\mu, \nu \in \Pc_2(E)$ 
\begin{equation}\label{def:w_2-distance}
    \Wc_2(\mu, \nu) = \left( \inf_{X\sim\mu, Y\sim\nu} \E\Big[\bd(X, Y)^2\Big] \right)^{1/2}, 
\end{equation}
where $X, Y$ are random variables with law given respectively by $\mu$, and $\nu$.
The Kantorovitch duality, see  \cite[Theorem 5.10]{villani_optimal_2009}, states that for any $\mu,\nu\in\Pc_2(E)$

\begin{align}\label{eq:kantorovitch}
    \Wc_2^2(\nu,\mu) &= \sup\Bigg\{ \int_{E} f(y)\mu(\d y) + \int_{E}g(y)\nu(\d y), \quad \text{with } f,g\in C_b(E) \text{ such that } \\
& \hspace{3cm}  f(x)+g(y) \; \le \bd(x,y)^2 \quad \forall x,y\in E \Bigg\},
\end{align}
where $C_b(E)$ stands for the space of bounded continuous functions on $E$. In the same book by Villani, see \cite[Theorem 6.15]{villani_optimal_2009}, one can also find a control on the Wasserstein distance by means of the variation between (signed) measures: if $(E, \bd)$ is a Polish space with $x_0 \in E$, then it holds that for $\mu, \nu \in \Pc_2(E)$
\begin{align}
\label{eq:w-control-tv}
\Wc_2^2(\mu, \nu) & \le \;  2 \int_{\R^d} \bd(x, x_0)^2 \big| \mu - \nu \big|(\d x),
\end{align}
where $\big| \mu - \nu  \big|$ is the variation of the (signed) measure $\mu - \nu$. Recall that if $M$ is a signed measure, then $|M|=M^{+}+M^{-}$, where $M^+$ and $M^-$ are respectively the positive and the negative parts of $M$.

\section{Mean-field particle systems with graphon interactions}
\label{s:definition}

We consider a multi-agent/particle  system with heterogenous interactions where each agent interacts with all other agents via an aggregated mean-field effect of the whole population of size $N$. Denoting by $X_t^{i,N}$ 
the state valued in $\R^d$ at time $t>0$ of agent $i$ (represented by a vertex/node) 
$\in$ $\llbracket 1,N\rrbracket := \{1, \dots, N\}$,  the influence  of the other agents on  agent $i$   is given by the neighbourhood   empirical measure 
\beq
\label{d:local-emp-mes}
\nu_t^{i,N}  &=& \frac{1}{N_i} \sum_{j=1}^N \xi_{ij}^N \delta_{X_t^{j,N}}, 
\enq
where $(\xi_{ij}^N)_{i,j} \in \R^{N\times N}$ is the interaction matrix and $N_i$ is the number of edges of node  $i$, i.e. $N_i$ $=$ $\sum_{j=1}^N \xi_{ij}^N$, called degree of interaction. The interaction matrix $(\xi_{ij}^N)_{i,j}$ is associated to a graphon $G_N$, i.e., a  symmetric measurable function from $I\times I$ into $[0,1]$, with $I$ $:=$ $[0,1]$, via
\beqs
\xi_{ij}^N &=& G_N(u_i,u_j), 
\enqs  
where $u_i$ $=$ $i/N$ represents the label of agent $i \in \llbracket 1,N\rrbracket$, in the population of $N$ agents, and it is assumed that $G_N$ is a step graphon, i.e. 
\begin{align}
G_N(u,v) &= \; 
G_N\Big(\frac{\lceil Nu\rceil}{N},\frac{\lceil Nv\rceil}{N} \Big), \quad (u,v) \in I\times I.
\end{align}
We assume that there is no isolated particle, i.e., that for every $N$, $\inf_{i=1, \dots, N} N_i>0$, which  implies that
\beqs
 \| G_N(u,.)\|_{_1}  \;  := \; \int_{[0,1]} G_N(u,v) \d v > 0, \quad \forall u \in [0,1]. 
\enqs

The states of  the  $N$ agents/particles  are  then governed by the  particle system:
\begin{equation}\label{sysN} 
\d X_t^{i,N}  \; = \;  b(X_t^{i,N},\nu_t^{i,N}) \d t \; +  \;  \sigma(X_t^{i,N},\nu_t^{i,N}) \d W_t^{u_i}, \quad 0 \leq t \leq T, \;\;\;  i \in \llbracket 1,N\rrbracket,  
\end{equation}
where 
$\{W^u: u \in [0,1]\}$ is a collection of  i.i.d. $n$-dimensional Brownian motions on some filtered probability space $(\Omega,\Fc,\F=(\Fc_t)_t,\P)$, and the measurable coefficients $b$ $:$ $\R^d\times\Pc_2(\R^d)$ $\rightarrow$ $\R^d$, 
$\sigma$ $:$ $\R^d\times\Pc_2(\R^d)$ $\rightarrow$ $\R^{d\times n}$ satisfy  the condition: 

\begin{Assumption}
\label{Hyp:bsig}
There exists a constant $K>0$ such that
    \beqs
        |b(x, \mu) - b(x', \mu')| + | \sigma(x, \mu) - \sigma(x', \mu') | &\leq&  K \left(|x-x'| + \Wc_2(\mu, \mu')\right),
    \enqs
for all $\mu, \mu' \in \Pc_2(\R^d)$ and $x, x' \in \R^d$, and 
   \beqs
| \sigma(0, \delta_0) | + |b(0,\delta_0)|  &<&  \infty,
    \enqs
where $\delta_0 \in \Pc(\R^d)$ is the Dirac delta centered at 0.
\end{Assumption}

\begin{Remark}
\label{rem:comparison}
System \eqref{sysN} slightly differs from the one proposed in the recent literature, e.g., \cite{delattre_note_2016, bayraktar_graphon_2023, bet_weakly_2023}, in two ways: (1) it is nonlinear with respect to the empirical measure, (2) the interaction is renormalised by the exact number of neighbors in the graph.
Concerning (1), we observe that we are able to extend the classical scalar formulation
\begin{align} \label{scalargraphon} 
\d X_t^{i,N} &= \; \frac{1}{N} \sum_{j=1}^N \xi_{ij}^N \Big[  \tilde b(X_t^{i,N},X_t^{j,N}) \d t +  \tilde \sigma(X_t^{i,N},X_t^{j,N}) \d W_t^{u_i} \Big], 
\end{align}
to the case where the interaction directly depends on the local empirical measure $\nu_t^{i,N}$, recall \eqref{d:local-emp-mes}, i.e., the scalar interaction framework \eqref{scalargraphon} is a special case of our more general setting. Concerning (2), we are forced to define $\nu_t^{i,N}$ as a probability measure to be consistent with the limit formulation, see Definition \ref{d:solution-graphonsysgen}. However, this last choice is not new in the literature, see, e.g., \cite{bhamidi_weakly_2019}, and implies that there are no isolated particles, i.e. $N_i$ $>$ $0$ for all $i$ (otherwise $\nu_t^{i,N}$ would not be defined). 
\end{Remark}

We are interested in the study of the limiting  particles system when the graphon $G_N$ converges to a graphon $G$ in cut-norm when $N$ goes to infinity, see \cite{lovasz_large_2012} (and recap in the next section).  It is formally expected 
a convergence to the graphon system with a continuum of agents/particles $u$ $\in$ $I$ driven by:
\begin{align} \label{graphonsysgen}
\d X_t^u  &  = \;  b\Big(X^u_t,   \frac{1}{\int_I G(u,v) \d v}  \int_I G(u,v) \P_{X_t^v}(\d y) \d v \Big)  \d t  \\
&  \quad \quad  + \;  \sigma\Big(X^u_t,  \frac{1}{\int_I G(u,v) \d v}  \int_I G(u,v) \P_{X_t^v}(\d y) \d v \Big) \; \d W_t^u, \quad 0 \leq t \leq T.  
\end{align}

In the sequel, we shall address on the one hand the well-posedness, existence and uniqueness of a solution to  the system \eqref{graphonsysgen},  and on the other hand, the convergence (law of large numbers and propagation of chaos)  of the $N$-particle system 
\eqref{sysN}  to the graphon system \eqref{graphonsysgen}.

\section{$L^2$-formulation of graphon system}
\label{s:l2-formulation}

We now aim to formulate  the graphon system \eqref{graphonsysgen} as an equation in infinite dimension by identifying  the collection of state variables of the continuum of agents  $\{X^u: u \in I\}$ with $\boX$ $=$ $(\boX_t)_{t\in[0,T]}$ where $\boX_t$ is the mapping: 
$(\omega,u)\in \Omega\times I$ $\mapsto$ $X_t^u(\omega)$ $\in$ $\R^d$, for $t$ $\in$ $[0,T]$. However, there is a measurability issue due to the fact that $(\omega,u)$ $\mapsto$ $W^u(\omega)$, hence $X_t^u(\omega)$,  is not jointly measurable in the product space of the usual continuum product and the classical Lebesgue space $(I, \Bc_I, \lambda_I)$ on the index space $I$.  
This issue is overcome in \cite{aurell_stochastic_2022} with the notion of rich Fubini extension, previously introduced in \cite{sun_exact_2006}. Indeed, there exists a probability space $(I, \Ic, \lambda)$ extending the usual Lebesque space $(I, \Bc_I, \lambda_I(\d u) = \d u)$, and a Fubini extension
$(\Omega\times I,\Fc\boxtimes\Ic,\P\boxtimes\lambda)$ of $(\Omega\times I,\Fc\otimes\Ic,\P\otimes\lambda)$, such that $X^u_t(\omega)$ is jointly $\Fc\boxtimes\Ic$-measurable in $(\omega,u)$. We now review the key properties of $(\Omega\times I,\Fc\boxtimes\Ic,\P\boxtimes\lambda)$ and discuss its use in our framework.

\begin{Remark}
    Observe that there is no chance to obtain a continuous mapping $u \mapsto X^u$. Indeed, continuity would imply measurability with respect to the Lebesgue measure which is not possible.
\end{Remark}

\subsection{Setting and notation}

We review the key result that grants the existence of $(\Omega\times I,\Fc\boxtimes\Ic,\P\boxtimes\lambda)$ such that $\boX$ $=$ $(\boX_t)_{t\in[0,T]}$ is $\Fc\boxtimes\Ic$-measurable. 
We start by recalling the definition of essentially pairwise independent (e.p.i.) random variables indexed by $u \in I$, i.e., $\{Y^u: u \in I\}$ is a collection of e.p.i. random variables if for $\lambda$-a.e. $u\in I$ the random variable $Y^u$ is independent of $Y^v$ for $\lambda$-a.e. $v \in I$. The following theorem is taken from \cite[Theorem 1]{aurell_stochastic_2022}.

\begin{Theorem}
    \label{thm:rich-fubini}
    Let $S$ be a Polish space. There exists a probability space $(I, \Ic, \lambda)$ extending the usual Lebesgue space $(I, \Bc, \lambda_I)$, a probability space $(\Omega, \Fc, \Pc)$ and a Fubini extension $(\Omega\times I,\Fc\boxtimes\Ic,\P\boxtimes\lambda)$ of $(\Omega\times I,\Fc\otimes\Ic,\P\otimes\lambda)$ such that for any measurable mapping $\phi$ from $(I, \Ic, \lambda)$ to $\Pc(S)$, there is a $\Fc\boxtimes\Ic$-measurable process $f:\Omega \times I \to S$ such that the random variables $f_x$ are e.p.i. and $\P \circ f^{-1}_x= \phi(x)$ for $x \in I$.
\end{Theorem}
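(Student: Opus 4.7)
Since the statement is quoted from \cite{aurell_stochastic_2022}, which in turn relies on the hyperfinite construction of \cite{sun_exact_2006}, the plan is to outline the main construction rather than grind through the technicalities of nonstandard analysis. A preliminary remark is that no genuine enlargement can be avoided: a classical observation (going back to Doob) shows that a continuum of non-degenerate essentially pairwise independent random variables cannot be jointly measurable with respect to the ordinary product $\Fc\otimes \Bc_I$ on $\Omega\times I$. This is precisely the obstruction that the Fubini extension is designed to bypass.

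The first step is to build the extended spaces. Fix an infinite hyperinteger $H$ and take the hyperfinite set $I^*=\{1,\dots,H\}$ equipped with internal uniform probability. Pushing this through the Loeb construction produces a countably additive probability $\lambda$ on a $\sigma$-algebra $\Ic$ with $\Bc_I\subset \Ic$ and $\lambda_{\vert \Bc_I}=\lambda_I$, so that $(I,\Ic,\lambda)$ extends $(I,\Bc_I,\lambda_I)$. In parallel, construct a Loeb space $(\Omega,\Fc,\P)$ rich enough to carry samples from any Borel probability on $S$. The Fubini extension $(\Omega\times I,\Fc\boxtimes\Ic,\P\boxtimes\lambda)$ is then defined as the Loeb space of the internal product of $\Omega^*$ and $I^*$. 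The Keisler Fubini theorem for Loeb spaces delivers exactly the Fubini extension property: $\Fc\boxtimes\Ic$ strictly contains $\Fc\otimes\Ic$, restricts to $\P\otimes\lambda$ on the latter, and for every $(\P\boxtimes\lambda)$-integrable map both iterated integrals exist and agree with the joint integral.

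The second step is to construct $f$ from $\phi$. Given a measurable $\phi:I\to \Pc(S)$, transfer produces an internal function $\phi^*:I^*\to \Pc(S)^*$, and richness of $\Omega^*$ allows one to define an internal map $f^*:\Omega^*\times I^*\to S^*$ whose coordinate slices $(f^*_x)_{x\in I^*}$ are internally independent with internal marginal $\phi^*(x)$. Taking standard parts yields $f:\Omega\times I\to S$, which is $\Fc\boxtimes\Ic$-measurable by construction. The marginal identity $\P\circ f_x^{-1}=\phi(x)$ follows for $\lambda$-a.e.\ $x$ from the corresponding internal identity via Loeb transfer, and the essential pairwise independence of $(f_x)_{x\in I}$ is obtained by the same transfer: for $\lambda\boxtimes\lambda$-a.e.\ $(x,y)$, the joint law of $(f_x,f_y)$ equals $\phi(x)\otimes \phi(y)$.

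The main obstacle is conceptual rather than computational: producing any product space on which an uncountable family of nontrivial e.p.i.\ random variables is jointly measurable, while still extending the usual Lebesgue product. Once the Loeb/Keisler framework is accepted, every verification in the proof — the Fubini identity, the marginal laws, and the e.p.i.\ property — is a routine application of the transfer principle and standard Loeb-measurability arguments.
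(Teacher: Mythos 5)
The paper does not prove this statement at all: Theorem \ref{thm:rich-fubini} is imported verbatim from \cite[Theorem 1]{aurell_stochastic_2022} (which in turn rests on the hyperfinite construction of \cite{sun_exact_2006}), so there is no internal proof to compare yours against. Your outline does follow the route actually taken in those references --- Loeb measure on a hyperfinite index set to extend $(I,\Bc_I,\lambda_I)$, the Keisler--Loeb product to get the Fubini property, and an internal family of independent random variables whose standard part yields the e.p.i.\ process --- and the opening remark about the non-measurability obstruction is the correct motivation. In that sense the plan is sound and is ``the same approach'' as the cited proof.

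Two steps in the sketch are, however, mislabeled or missing, and they are exactly the places where the real work sits. First, you say that ``transfer produces an internal function $\phi^{*}:I^{*}\to\Pc(S)^{*}$.'' The transfer principle applies to standard (first-order definable) objects; here $\phi$ is merely $\Ic$-measurable, where $\Ic$ is the Loeb $\sigma$-algebra, and such a map is not a standard object one can transfer. The correct tool is Anderson's lifting theorem: one must produce an \emph{internal} lifting $F:I^{*}\to\Pc(S)^{*}$ with ${}^{\circ}F(i)=\phi({}^{\circ}i)$ for Loeb-a.e.\ $i$, and for a general Polish $S$ this is usually routed through a measurable parametrization $x\mapsto g_x$ with $g_x:[0,1]\to S$ and $\lambda_I\circ g_x^{-1}=\phi(x)$, so that everything reduces to a hyperfinite family of internally i.i.d.\ uniforms. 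Neither the lifting step nor the measurable-selection step is free, and the sketch should at least name them. Second, the statement asserts $\P\circ f_x^{-1}=\phi(x)$ for \emph{every} $x\in I$, whereas your argument delivers it only for $\lambda$-a.e.\ $x$. Closing this requires redefining $f_x$ on the exceptional $\lambda$-null set $N$ by sampling $\phi(x)$ directly from the atomless space $(\Omega,\Fc,\P)$; this preserves $\Fc\boxtimes\Ic$-measurability only because $\Omega\times N$ is $\P\boxtimes\lambda$-null, and preserves the e.p.i.\ property because that property is itself an almost-everywhere statement. With those two repairs the plan matches the published proof.
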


Denote by $\Cc_d$ $:=$ $C([0,T],\R^d)$ the set of continuous functions from $[0,T]$ into $\R^d$, and let 
$\phi: I \to \Pc(\Cc_d\times \R^d)$ be the mapping defined by $\phi(u) = q^u\otimes \mu^u_0$ with $q^u$ the Wiener measure on $\Cc_d$ and $\boldsymbol{\mu}_0: I \to \Pc(\R^d)$ a $\Bc_I$-measurable  (hence $\Ic$-measurable) function.  
From Theorem \ref{thm:rich-fubini}, there exists $\boldsymbol{Z}: \Omega \times I \to \Cc_d$ a $\Fc\boxtimes\Ic$-measurable process defined for $u\in I$ by $\boldsymbol{Z}^u(\cdot) = (W^u(\cdot), \zeta^u(\cdot))$ and such that
\begin{equation} \label{Fubini} 
\begin{cases}    
 (W^u, \zeta^u)_{u \in I} \mbox{ are   e.p.i. random variables}  \\
 \mbox{The law of }  (W^u, \zeta^u) \mbox{  is equal to }  q^u\otimes \mu^u_0, \mbox{  for all } u \in I.
\end{cases}
\end{equation}

We refer to \cite{aurell_stochastic_2022,carmona_probabilistic_2018} and the references therein for more information on rich Fubini extensions.

\medskip

Let $L^2_\boxtimes(\Omega\times I, \Cc_d)$ denote the space of equivalence classes of $(\Fc\boxtimes\Ic, \Bc(\Cc_d))$-measurable functions which are $\P \boxtimes \lambda$-square integrable, i.e., $\boldsymbol{\phi} \in L^2_\boxtimes(\Omega\times I, \Cc_d)$ if
\beqs
    \E^{\boxtimes} \big[ \sup_{0\leq t\leq T} | \boldsymbol{\phi}_t|^2 \big] & := & \int_{\Omega\times I} \sup_{0\leq t\leq T} | \phi_t^u(\omega)|^2 \P\boxtimes\lambda(\d \omega,\d u) \; < \; \infty. 
\enqs
We write $L_\lambda^2(I,\R^d)$ for the Hilbert space  of $\lambda$-a.e. 
equivalent classes of  $\Ic$-measurable functions $\boldsymbol{\varphi}$:  $u$ $\in$ $I$  $\mapsto$ $\varphi^u$ $\in$ $\R^d$, written in short as $\boldsymbol{\varphi}$ $=$ $(\varphi^u)_{u\in I}$, such that $\int_I |\varphi^u|^2 \lambda(\d u)$ $<$ $\infty$. We equip $L_\lambda^2(I,\R^d)$ with the standard scalar product $<\boldsymbol{\varphi},\boldsymbol{\psi}>_{_\lambda}$ $=$ $\int_I \varphi^u \cdot \psi^u \lambda(\d u)$ and the induced norm $\|\boldsymbol{\varphi}\|_{_\lambda}$ $=$ $\big( \int_I |\varphi^u|^2 \lambda(\d u) \big)^{1\over 2}$. We observe that $L^2_\lambda(I, \R^d)$ is strictly larger from the standard $L^2(I, \R^d)$ Hilbert space. We sometimes write $L^2_\lambda(I)$ $=$ $L^2_\lambda(I,\R^d)$, and $L^2(I)$ $=$ $L^2(I,\R^d)$.

\subsection{Graphons}
\label{sec:graphon}

Let  $\Gc$ be the space of bounded, symmetric and  $\lambda_I$-measurable functions on $I\times I$. A graphon is an element in $\Gc$ valued in $[0,1]$.   For $G$ $\in$ $\Gc$, its cut-norm is defined by 
\begin{align}
\Vert G\Vert_{\square} & := \; \sup_{S,S' \in \Bc(I)} \Big| \int_{S\times S'} G(u,v) \d u \d v \Big|.  
\end{align}
We denote by $\Vert G \Vert_1$ the standard $L^1$ norm, i.e., $\Vert G \Vert_1$ $=$ $\int_{I^2} G(u,v) \d u \d v$, and we shall assume 

\begin{Assumption} \label{Hyp:Gpos}
\begin{enumerate}
    \item[(i)] For $\lambda_I$-a.e. $u$ $\in$ $I$, $\Vert G(u, \cdot) \Vert_1 \; = \; \int_I G(u,v) \d v  > \; 0$, 
    \item[(ii)] $\int_I  \Vert G(u, \cdot) \Vert_1^{-1} \d u  < \; \infty$
\end{enumerate}
\end{Assumption}
or the stronger assumption 
\begin{Assumption} \label{Hyp:Gposuni}
\begin{align}
G_\infty^{-1} \; := \; \sup_{u\in I} \Vert G(u, \cdot) \Vert_1^{-1}  & < \; \infty.    
\end{align}
\end{Assumption}
Here and in the rest of the paper, the supremum with respect to $u\in I$ should be intended as an essential supremum with respect to $\lambda$, i.e., the $L^\infty$ norm on $(I, \lambda)$.

Assumption \ref{Hyp:Gposuni} requires the degree to be bounded from below. Examples of graphons satisfying this condition are the constant graphon and, more generally, the Stochastic Block model with no isolated population, but also bipartite graphons and Cayley graphons \cite{coppini_note_2022}. We believe this assumption to be necessary for the Propagation of Chaos result in view of the estimate given in Lemma \ref{lemFG}: the less connections a particle has, the less control we have between its law and the limit law in the infinite system. Assumption \ref{Hyp:Gpos} is a rather weak condition which includes graphons with vanishing degree as $G_p(u,v)=(uv)^p$ for any $0\leq p<1$. Indeed,
\[
    \Vert G(u, \cdot) \Vert_1 = \int_0^1 (uv)^p \d v = \frac {u^p}{1+p}
\]
and
\[
    \int_0^1 \Vert G(u, \cdot) \Vert_1^{-1} = (1+p) \int_0^1 u^{-p} \d u = \frac {1+p}{1-p} < \infty.
\]
Observe that both Assumption \ref{Hyp:Gpos} and \ref{Hyp:Gposuni} require no regularity on the graphon as a function from $I^2$ to $I$. It is not difficult to construct examples of irregular graphons satisfying a condition on the degree as this last one is not related to continuity.

\vspace{1mm}
 
When the context is clear, we also view  $G$ as an integral operator $G$ $:$ ${L^2(I) \to L^2(I)}$ defined by
\begin{equation}
[Gf](u) = \|G(u,\cdot)\|_1^{-1}\int_I G(u,v)f(v) \d v, \quad \lambda_I\text{-a.e. } u \in I, \quad f \in  L^2(I). 
\end{equation} 
It is not difficult to see that $G$ is a symmetric Hilbert-Schmidt operator.


For a normed space $E$ with norm defined by $|\cdot|$,  define the space  $\Mc(E) \subset \Pc_2(E)^I$ of (products of) probability measures by
\begin{align}\label{def:Mc}
    \Mc (E) :=\{ \boldsymbol{\mu}\in \Pc_2(E)^I \text{ s.t.}\quad \forall B\in\Bc(E) \quad \boldsymbol{\mu}(B): u\mapsto \mu^u(B) \text{ is } \lambda_I\text{-measurable}\\
    \text{ and s.t. } \sup_{u\in I}\int_{E} |x|^2\mu^u(\d x) < \infty \}.
\end{align}
We will simply write $\Mc$ instead of $\Mc(E)$ when the context is clear. The space $\Mc$ is a metric space when endowed with the distance defined for any $\boldsymbol\mu, \boldsymbol\nu \in \Pc_2(E)^I$ by
\begin{equation}\label{def:Mc-distance}
    \bd(\boldsymbol\mu, \boldsymbol\nu) = \sup_{u \in I} \Wc_2(\mu^u, \nu^u) = \sup_{u\in I} \left(\inf_{X^u\sim\mu^u, Y^u\sim\nu^u} \E\Big[|X^u - Y^u|^2\Big]\right)^{1/2}, 
\end{equation}
where $\Wc_2$ is the classical 2-Wasserstein distance on $\Pc_2(E)$, recall definition \eqref{def:w_2-distance}.

\vspace{1mm}

In the sequel, we will need the following lemma relying the graphon operator $G$ with the space $\Mc$. 

\begin{Lemma}
\label{lemma:graphon-operator}
Let  Assumption \ref{Hyp:Gpos}(i) hold. 
Then $G$ can be extended to the operator  
\begin{equation}
    \begin{split}
        G: \Mc(E) &\to \Mc(E)\\
        \boldsymbol{\mu} &\mapsto   [G \boldsymbol{\mu}]
    \end{split}
\end{equation}
where for $\lambda$-a.e. $u\in I$, $[G\boldsymbol{\mu}]^u$ is defined by
\begin{align}\label{def:g-operator}
    [G\boldsymbol{\mu}]^u(\d x) &:= \;  \Vert G(u, \cdot) \Vert_1^{-1} \int_I G(u,v) \mu^v(\d x) \d v \;  \in \Pc_2(E).  
\end{align} 
In particular, for any $\boldsymbol{\nu}, \boldsymbol{\eta} \in \Mc(E)$, it holds that 
\begin{align} \label{estimG}
    \sup_{u \in I} \Wc_2([G\boldsymbol{\nu}]^u, [G\boldsymbol{\eta}]^u) &\leq \;  \sup_{u \in I} \Wc_2(\nu^u, \eta^u).
\end{align}

\end{Lemma}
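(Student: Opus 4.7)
The proof naturally splits into two parts: verifying that $[G\boldsymbol{\mu}]$ lies in $\Mc(E)$ whenever $\boldsymbol{\mu}$ does, and establishing the contraction estimate \eqref{estimG} on the Wasserstein distance.

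\textbf{Well-definedness.} I would fix $\boldsymbol{\mu}\in\Mc(E)$ and check the three defining properties of $\Mc(E)$ for $[G\boldsymbol{\mu}]$. Positivity and normalization are immediate: for $\lambda_I$-a.e.\ $u\in I$, Assumption \ref{Hyp:Gpos}(i) guarantees $\|G(u,\cdot)\|_1>0$, so \eqref{def:g-operator} defines a nonnegative Borel measure on $E$ whose total mass equals $\|G(u,\cdot)\|_1^{-1}\int_I G(u,v)\mu^v(E)\d v=1$. Measurability of $u\mapsto [G\boldsymbol{\mu}]^u(B)$ for each Borel $B\subset E$ follows from Fubini applied to the $\lambda_I\otimes\lambda_I$-measurable map $(u,v)\mapsto G(u,v)\mu^v(B)$, whose second factor is $\lambda_I$-measurable by the definition of $\Mc(E)$. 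The uniform second-moment bound comes from Tonelli:
\begin{align*}
\int_E |x|^2\,[G\boldsymbol{\mu}]^u(\d x) &= \|G(u,\cdot)\|_1^{-1}\int_I G(u,v)\int_E |x|^2\mu^v(\d x)\,\d v \;\leq\; \sup_{v\in I}\int_E |x|^2\mu^v(\d x)\;<\;\infty,
\end{align*}
since $\|G(u,\cdot)\|_1^{-1}G(u,v)\,\d v$ is a probability measure on $I$.

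\textbf{Wasserstein contraction.} For the estimate \eqref{estimG} I would rely on the Kantorovich dual formulation \eqref{eq:kantorovitch} rather than constructing an explicit coupling, thereby sidestepping any measurable-selection issue in $v$. Fix $u\in I$ and take any $f,g\in C_b(E)$ satisfying $f(x)+g(y)\leq |x-y|^2$ for every $x,y\in E$. Integrating against $[G\boldsymbol{\nu}]^u$ and $[G\boldsymbol{\eta}]^u$, interchanging integrals by Fubini, and applying duality pointwise in $v$ yields
\begin{align*}
\int_E f\,\d[G\boldsymbol{\nu}]^u + \int_E g\,\d[G\boldsymbol{\eta}]^u
&= \|G(u,\cdot)\|_1^{-1}\int_I G(u,v)\Bigl[\int_E f\,\d\nu^v + \int_E g\,\d\eta^v\Bigr]\d v\\
&\leq \|G(u,\cdot)\|_1^{-1}\int_I G(u,v)\,\Wc_2^2(\nu^v,\eta^v)\,\d v\\
&\leq \sup_{v\in I}\Wc_2^2(\nu^v,\eta^v),
\end{align*}
where the last step again uses that $\|G(u,\cdot)\|_1^{-1}G(u,v)\d v$ is a probability measure on $I$. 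Taking the supremum over admissible $(f,g)$ and then over $u\in I$ gives \eqref{estimG}.

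The only point requiring attention is justifying the application of Fubini in the first line: since $v\mapsto\nu^v,\eta^v$ are $\lambda_I$-measurable into $\Pc_2(E)$ (equipped with weak convergence, by definition of $\Mc$), the maps $v\mapsto \int_E f\,\d\nu^v$ and $v\mapsto\int_E g\,\d\eta^v$ are $\lambda_I$-measurable for every $f,g\in C_b(E)$, and they are bounded since $f,g$ are. Moreover, writing the final bound in terms of the pointwise supremum avoids the need to verify measurability of $v\mapsto\Wc_2^2(\nu^v,\eta^v)$ itself. No further obstacle is expected.
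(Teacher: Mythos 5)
Your proposal is correct and follows essentially the same route as the paper: well-definedness is checked via Fubini/Tonelli and the normalization $\|G(u,\cdot)\|_1^{-1}\int_I G(u,v)\,\d v=1$, and the estimate \eqref{estimG} is obtained from the Kantorovich dual formulation \eqref{eq:kantorovitch} by interchanging integrals and bounding the inner dual pairing by $\sup_{v\in I}\Wc_2^2(\nu^v,\eta^v)$ before taking suprema over $(f,g)$ and $u$. Your remark about sidestepping measurability of $v\mapsto\Wc_2^2(\nu^v,\eta^v)$ is a sound observation that the paper handles the same way by taking $\sup_{v}\sup_{f,g}$ inside the integral.
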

\begin{proof}
It is easy to see that $[G\boldsymbol{\mu}]^u$ defines a probability measure. The map $u\mapsto [G\boldsymbol\mu]^u(B)$ is measurable for any $B\in\Bc(E)$ because of Fubini's theorem. Indeed, $G(u,v)\mu^v(B)$ is jointly measurable in $(u,v)$. To see that $G(\boldsymbol\mu)\in\Mc$, it remains to prove that
\begin{align}
\int_E |x|^2 [G\bomu]^u(\d x) & < \infty, \quad \forall u \in I,  \;\;  \mbox{ and } \;\; \sup_{u\in I} \int_E |x|^2 [G\bomu]^u(\d x) \; < \;   \infty.
\end{align}
Observe that
\begin{equation*}
\begin{split}
    \int_E |x|^2 [G\bomu]^u(\d x) & = \Vert G(u, \cdot) \Vert_1^{-1} \int_I \int_E G(u,v) |x|^2 \mu^v(\d x) \d v \\
    & \le \|G(u,\cdot)\|_1^{-1} \int_I G(u,v)\d v\left(\sup_{v\in I} \int_E |x|^2 \mu^v(\d x)\right)  \; < \; C,
\end{split}
\end{equation*}
where we have used the fact that $\bomu$ $\in$ $\Mc$. The other statement follows by observing that the constant $C$ does not depend on $u$.

Finally,  to prove the continuity estimate, we consider the Kantorovitch duality of Wasserstein distance which states that (recall equation \eqref{eq:kantorovitch}) for any $\mu,\nu\in\Pc_2(E)$,
\begin{align*}
\Wc_2^2(\nu,\mu) &= \sup\Bigg\{ \int_{E} f(y)\mu(\d y) + \int_{E}g(y)\nu(\d y), \quad \text{with } f,g\in C_b(E) \text{ such that } \\
& \hspace{3cm}  f(x)+g(y) \; \le |x- y|^2 \quad\forall x,y\in E \Bigg\}. 
\end{align*}
For  $f,g$ as above, we then write for all  $\bonu$, $\boeta$ $\in$ $\Pc_2(E)^I$,  $u$ $\in$ $I$, 
\begin{align} \label{derivG}
    & \int_{E} f(y)[G\boldsymbol{\nu}]^u(\d y) + \int_{E} g(y)[G\boldsymbol{\eta}]^u(\d y)  \\
  =   &  \; \Vert G(u, \cdot) \Vert_1^{-1} \Big( \int_{E} f(y)\int_I G(u,v)\nu^v(\d y)\d v + \int_{E} g(y)\int_I G(u,v)
  \eta^v(\d y)\d v \Big)  \\
=    &\;  \Vert G(u, \cdot) \Vert_1^{-1} \int_I G(u,v) \Big( \int_{E} f(y)\nu^v(\d y) + \int_{E} g(y)\eta^v(\d y) \Big)\d v \label{interG} \\
\le     &\;  \Vert G(u, \cdot) \Vert_1^{-1} 
     \int_I G(u,v)  \d v   
     \Big[ \sup_{v\in I}\sup_{f,g}\Big( \int_{E} f(y)\nu^v(dy) + \int_{E} g(y)\eta^v(\d y) \Big) \Big]  \\
=    &\;  \Vert G(u, \cdot) \Vert_1^{-1} \int_I G(u,v)\d v \sup_{v\in I}\Wc_2^2(\nu^v,\mu^v)  \; =  \;  \sup_{v\in I}\Wc_2^2(\nu^v,\mu^v).
\end{align}
Taking $\sup_{u\in I}\sup_{f,g}$ on the left-hand side, we obtain the estimate \eqref{estimG}.
\end{proof}


\subsection{Graphon particle system in $L^2_\lambda$} 

Denote by $\boldsymbol{W}$ $:$ $(\omega,u)$ $\in$ $\Omega\times I$ $\mapsto$ $(W_t^u(\omega))_{t\in [0,T]}$ $\in$  $\Cc_d$, and 
$\boldsymbol{\zeta}$ $:$ $(\omega,u)$ $\in$ $\Omega\times I$ $\mapsto$ $\zeta^u(\omega) \in \R^d$, and $\F$ the filtration generated by $\boldsymbol W$ and $\bozeta$. 
The graphon system \eqref{graphonsysgen} can be formally written as a stochastic equation in $L_\lambda^2(I,\R^d)$: 
\begin{align} \label{dynboX}
\d \boX_t &= \; \mfb(\boX_t,(\P_{X^u_t})_{u \in I}) \d t + \mfs(\boX_t,(\P_{X^u_t})_{u \in I}) \d \boldsymbol{W}_t, \quad 0 \leq t\leq T, \; 
\boX_0 = \bozeta, 
\end{align}
where for $\bolx$ $=$ $(x^u)_{u\in I}$ $\in$ $L_\lambda^2(I,\R^d)$ and $\boldsymbol{\mu} \in \Pc_2(\R^d)^I$ the drift
\[
\mfb = (\mfb^u)_{u\in I} : L_\lambda^2(I,\R^d)\times\Pc_2(\R^d)^I \rightarrow  L_\lambda^2(I,\R^d)
\]
is defined by $\mfb^u\left( \bolx,\bomu\right) = \;  b\left(x^u, [G \boldsymbol{\mu}]^u \right)$ for $\lambda$-a.e. $u\in I$, and the diffusion
\[
\mfs = (\mfs^u)_{u\in I} : L_\lambda^2(I,\R^d)\times\Pc_2(\R^d)^I \rightarrow  L_\lambda^2(I,\R^{d\times n})
\]
is defined by $\mfs^u( \bolx,\bomu) \; = \;  \sigma \left(x^u, [G \boldsymbol{\mu}]^u \right)$ for $\lambda$-a.e. $u\in I$.

\medskip

We give the following definition
\begin{Definition}
\label{d:solution-graphonsysgen}
A solution to \eqref{dynboX} is an $\F$-progressively measurable process  $\boX\in L^2_\boxtimes(\Omega\times I, \Cc_d)$ satisfying  for  $u$ $\in$ $I$ and $\P\boxtimes\lambda$-a.e.
\begin{align}
X_t^u & = \; \zeta^{u} + \int_0^t \mfb^u(\boX_s,(\P_{X^v_s})_{v \in I}) \d s + \int_0^t \mfs^u(\boX_s,(\P_{X^v_s})_{v \in I}) \d W_s^u, \quad 
0 \leq t \leq T.
\end{align}
\end{Definition}

\begin{Remark}
The formulation of the stochastic equation \eqref{dynboX} in the Hilbert space $L_\lambda^2(I,\R^d)$ is different from the more standard one with cylindrical formulation as in \cite{cosetal21}. Here, the noise is driven by the collection of e.p.i. Brownian motions $(W^u)_{u\in I}$, which defines the process $\bW$, and a stochastic integral in the following sense: 
given an $\F$-progressively measurable process $(\boldsymbol{\phi}_t)_t \in L^2_\boxtimes(\Omega\times I,\Cc([0,T],\R^{d\times n}))$, the stochastic integral $\boldsymbol{M}_.$ $=$ $\int_0^. \boldsymbol{\phi}_t \d \bW_t$ is  the $\F$-progressively measurable process 
\begin{align}
M_.^u &= \; \int_0^. \phi_t^u \d W_t^u, \quad u \in U.
\end{align}
Notice that by Fubini extension, and isometry of stochastic integral w.r.t. Brownian motion, 
\begin{align}
\E_\boxtimes\big[ |M_T|^2 ] & = \; \int_I \E \Big| \int_0^T \phi_t^u \d W_t^u \Big|^2 \lambda(\d u) \; = \; 
\E_\boxtimes \Big[ \int_0^T | \boldsymbol{\phi}_t|^2 \d t \Big]<\infty. 
\end{align}
Moreover, since $\boldsymbol\phi$ and $\boldsymbol W$ are $\Fc\boxtimes\Ic$-measurable, then also $\boldsymbol M$ is $\Fc\boxtimes\Ic$-measurable. Thus, $\boldsymbol M \in L^2_\boxtimes(\Omega\times I, \Cc_d)$. More details are presented in Appendice \ref{secappen:L2}.  \end{Remark}

Let us now first  prove the existence and uniqueness of the solution $\boX$ to \eqref{dynboX}  in the sense of Definition \ref{d:solution-graphonsysgen}.  
Recall that $I \ni u \mapsto \mu_0^u = \Lc(\zeta^u)  \in \Pc(\R^d)$ is $\Bc_I$-measurable. We further suppose the following assumption on the initial condition:

\begin{Assumption}
\label{ass:initial-condition}
The map $I\ni u \mapsto \mu_0^u\in \Pc(\R^d)$ is $\lambda_I$-measurable, and there exists some $\epsilon>0$ s.t.
\begin{align}
\sup_{u\in I} \int_{\R^d}  |x|^{2+\epsilon} \mu_0^u(\d x) &< \;  \infty.    
\end{align}
\end{Assumption}

\begin{Proposition}
\label{p:existence-uniqueness-graphonsysgen}
Suppose Assumptions \ref{Hyp:bsig}, \ref{Hyp:Gpos}(i), and \ref{ass:initial-condition} hold. Then,  there exists a unique solution $\boX$ $:$ $(\omega,u)$ $\mapsto$ $X^u(\omega)$ to \eqref{dynboX} in the sense of Definition \ref{d:solution-graphonsysgen}, and 
for any $t$ $\in$ $[0,T]$, $(X_t^u)_{u\in I}$ is a collection of e.p.i. random variables.  
Moreover, the process $\boX$ is $\P\boxtimes\lambda$-measurable, lies in $L^2_\boxtimes(\Omega\times I, \Cc_d)$ with  
\begin{align}
\sup_{u\in I} \E \big[ \sup_{0\leq t \leq T} |X_t^u|^{2+\epsilon} \big] & < \; \infty,     
\end{align}
and $u$ $\in$ $I$ $\mapsto$ $\mu^u$ $:=$ $\P_{X_u}$ $\in$ $\Pc(\Cc_d)$ is $\Bc_I$-measurable.
\end{Proposition}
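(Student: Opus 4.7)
The plan is a Picard fixed-point argument on the space of measure flows, combined with a careful propagation of joint $\Fc\boxtimes\Ic$-measurability through each iterate. Introduce the metric space $\Ec_T := C([0,T], \Mc(\R^d))$ with distance $d_T(\bomu,\bonu) := \sup_{t\le T}\sup_{u\in I}\Wc_2(\mu_t^u, \nu_t^u)$. Given $\bomu \in \Ec_T$, for each fixed $u \in I$ the coefficients $x \mapsto b(x, [G\bomu_t]^u)$ and $x \mapsto \sigma(x, [G\bomu_t]^u)$ are Lipschitz with constant independent of $u,t$ (by Assumption \ref{Hyp:bsig} together with Assumption \ref{Hyp:Gpos}(i) to make $[G\bomu_t]^u$ well defined via Lemma \ref{lemma:graphon-operator}), and $t \mapsto [G\bomu_t]^u$ is $\Wc_2$-continuous; classical strong SDE theory therefore gives a unique solution $X^{u,\bomu}$ of
\begin{equation*}
dX_t^{u,\bomu} = b\!\left(X_t^{u,\bomu}, [G\bomu_t]^u\right)\d t + \sigma\!\left(X_t^{u,\bomu}, [G\bomu_t]^u\right)\d W_t^u, \qquad X_0^{u,\bomu} = \zeta^u,
\end{equation*}
with $\sup_u\E\sup_{t\le T}|X_t^{u,\bomu}|^2 < \infty$ under Assumption \ref{ass:initial-condition}. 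Set $\Phi(\bomu)_t^u := \Lc(X_t^{u,\bomu})$, which, by Lemma \ref{lemma:graphon-operator} and standard SDE moment estimates, maps $\Ec_T$ into itself.

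For the contraction, I couple two flows $\bomu,\bonu$ using the same Brownian motion $W^u$. Lipschitz continuity and Burkholder--Davis--Gundy give
\begin{equation*}
\E\sup_{s\le t}\left|X_s^{u,\bomu}-X_s^{u,\bonu}\right|^2 \;\le\; C\!\int_0^t \E\left|X_s^{u,\bomu}-X_s^{u,\bonu}\right|^2 \d s \;+\; C\!\int_0^t \Wc_2^2\!\left([G\bomu_s]^u,[G\bonu_s]^u\right)\d s,
\end{equation*}
and Gronwall combined with the key estimate \eqref{estimG} of Lemma \ref{lemma:graphon-operator} yields $\sup_u \E\sup_{s\le t}|X_s^{u,\bomu}-X_s^{u,\bonu}|^2 \le C\int_0^t d_s(\bomu,\bonu)^2 \d s$. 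Iterating this inequality (Picard-style, so no short-time restriction is needed) gives existence and uniqueness of a fixed point $\bomu^\star$, and then the pathwise solution $\boX$ of \eqref{dynboX} is the decoupled SDE driven by $\bomu^\star$.

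The main obstacle is carrying joint $\Fc\boxtimes\Ic$-measurability through the construction, since by the Remark following Definition \ref{d:solution-graphonsysgen} no pointwise regularity in $u$ is available. I would start from the jointly measurable data $(\bozeta, \bW)$ supplied by Theorem \ref{thm:rich-fubini} and iterate $\boX^{(n+1)}$ from $\boX^{(n)}$ via Definition \ref{d:solution-graphonsysgen}: the drift Bochner integral preserves joint measurability by Fubini; the stochastic integral $u\mapsto \int_0^\cdot \mfs^u(\boX^{(n)}_s,\ldots)\d W_s^u$ remains jointly measurable via the construction recalled in Appendix \ref{secappen:L2}; and $u\mapsto[G\bomu^{(n)}_t]^u$ is $\Ic$-measurable by Lemma \ref{lemma:graphon-operator}. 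The contraction estimate promotes convergence of $\boX^{(n)}$ in $L^2_\boxtimes(\Omega\times I,\Cc_d)$, and the $L^2_\boxtimes$ limit $\boX$ inherits $\Fc\boxtimes\Ic$-measurability. The e.p.i.\ property then follows because $X^u$ is a measurable functional of $(\zeta^u,W^u)$ alone (the flow $\bomu^\star$ being deterministic) and $(\zeta^u,W^u)_{u\in I}$ are e.p.i.\ by Theorem \ref{thm:rich-fubini}; the uniform $(2+\eps)$-moment bound is a standard BDG/Gronwall estimate on the decoupled SDE using Assumption \ref{ass:initial-condition} and the uniform-in-$u$ moment control on $[G\bomu^\star_t]^u$ from Lemma \ref{lemma:graphon-operator}; and Borel measurability of $u\mapsto\P_{X^u}\in\Pc(\Cc_d)$ is obtained by testing against bounded continuous functionals, which reduces to measurability of $u\mapsto \E[\Psi(X^u)]$, itself following from joint measurability of $\boX$ via Fubini.
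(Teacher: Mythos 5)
Your proposal is correct in its essentials and follows the same route as the paper: a Sznitman-type fixed point on measure flows equipped with the $\sup_{u\in I}\Wc_2$ metric, a decoupled SDE for each frozen flow, a contraction estimate obtained by coupling through the same Brownian motion together with the key inequality \eqref{estimG} of Lemma \ref{lemma:graphon-operator}, BDG and Gronwall, and then the e.p.i.\ property from the fact that $X^u$ is a measurable functional of $(\zeta^u,W^u)$ alone. The only place where you are thinner than you should be is the last step: deducing measurability of $u\mapsto \P_{X^u}$ from joint $\Fc\boxtimes\Ic$-measurability of $\boX$ via Fubini only yields $\Ic$-measurability, whereas the proposition claims $\Bc_I$-measurability, and more importantly the space $\Mc$ (and hence the well-definedness of $[G\bomu]^u$ as an integral against Lebesgue measure in $v$) requires $\lambda_I$-measurability of $u\mapsto\mu^u(B)$ at every stage of the iteration, which the richer $\sigma$-algebra $\Ic$ does not provide. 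The paper closes this by arguing (following \cite{bayraktar_graphon_2023}, via an induction on Picard iterates represented as measurable functionals of $(\zeta^u,W^u)$) that the law of the decoupled solution depends on $u$ only through the $\Bc_I$-measurable data $\mu_0^u$ and the frozen flow, so $u\mapsto\Lc(X^{u,\bonu})$ is genuinely $\Bc_I$-measurable; you should route the measurability argument through this representation rather than through the Fubini extension.
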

\begin{proof}
See Appendix \ref{sec:appenexis}. 
\end{proof}

\vspace{1mm}

We state some continuity properties of the graphon system. These conditions are classical when working with the particle trajectories: notably, the Lipschitz condition stated below, see Assumption \ref{hyp:lipmu0}, is necessary to have a uniform control for the Propagation of Chaos argument. We refer to \cite{bayraktar_graphon_2023} for similar assumptions and more about this choice, and to \cite{bet_weakly_2023} for weaker assumptions on the graphon but with somehow non-comparable hypothesis for the initial conditions.

\begin{Assumption} \label{hyp:uni-cont}
There exists a finite collection of intervals $\{J_i : i=1,\dots,n\}$ for some $n\in\N$ such that $\cup_{i=1}^nJ_i = I$ and for all $i\in\llbracket 1,n\rrbracket$:
\begin{enumerate}
\item the map $J_i \ni u$ $\mapsto$ $\mu_0^u$ $\in$ 
$\Pc(\R^d)$ is continuous w.r.t. $\Wc_2$,
\item for each $i, j \in \llbracket 1,n\rrbracket$, $G$ is uniformly continuous on $J_i\times J_j$. 
\end{enumerate}
\end{Assumption}

or the stronger assumption

\begin{Assumption} \label{hyp:lipmu0}
There exists a finite collection of intervals $\{J_i : i=1,\dots,n\}$ for some $n\in\N$ such that $\cup_{i=1}^nJ_i = I$ and:
\begin{enumerate}
\item for all $i\in\llbracket 1,n\rrbracket$, 
\begin{align}
\Wc_2(\mu_0^{u},\mu_0^{v}) & \leq \; \kappa|u - v|, \quad \forall u, v  \in J_i,
\end{align} 
\item $G$ is Lipschitz, i.e. 
\[
|G(u_1,v_1)-G(u_2,v_2)| \le K(|u_1-u_2| + |v_1-v_2|),\quad\forall (u_1,v_1),(u_2,v_2)\in J_i\times J_j, i,j\in\llbracket1,n\rrbracket.
\]
\end{enumerate}
\end{Assumption}

Assumption \ref{hyp:lipmu0} requires an extra blockwise regularity which is classical to obtain a precise control on the trajectory estimates. All the examples cited after Assumption \ref{Hyp:Gposuni} satisfy this condition.

We recall the definition of the truncated Wasserstein distance, for $\mu,\nu\in\Pc(\Cc^d)$, $t\in[0,T]$:
\begin{align}
    \Wc_{2,t}(\mu,\nu) := \bigg( \inf_{X\sim\mu,Y\sim\nu} \E[\sup_{0\le s\le t}|X_s-Y_s|^2] \bigg)^{1/2}.
\end{align}
We observe that it holds the following inequality:
\begin{align}
     \sup_{0\le s\le t}\Wc_2^2(\mu_s,\nu_s) \le \sup_{0\le s\le t}\E\big[|X_s-Y_s|^2\big]\le \E\big[\sup_{0\le s\le t}|X_s-Y_s|^2\big],
\end{align}
and taking the infimum over all $X\sim\mu,Y\sim\nu$ we have that:
\begin{align}
    \sup_{0\le s\le t}\Wc_2^2(\mu_s,\nu_s) \le \Wc_{2,t}^2(\mu,\nu).
\end{align}

\begin{Proposition} \label{prop-continuity}
\begin{enumerate}
\item Under Assumptions \ref{ass:initial-condition} and \ref{hyp:uni-cont}, the map $u$ $\in$ $J_i$ $\mapsto$ $\mu^u$ $\in$ $\Pc(\Cc_d)$ is continuous w.r.t. $\Wc_{2,T}$, for any $i$ $\in$ $\llbracket 1,n\rrbracket$. 
\item Furthermore, if Assumptions \ref{ass:initial-condition} and \ref{hyp:lipmu0} hold, then there exists some $\kappa$ $>$ $0$ s.t. 
$\Wc_{2,T}(\mu^u,\mu^v)$ $\leq$ $\kappa|u-v|$, for any $u,v$ $\in$ $J_i$, $i$ $\in$ $\llbracket 1,n\rrbracket$. 
\end{enumerate}
\end{Proposition}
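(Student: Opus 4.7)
The plan is to reduce both continuity statements to a single pathwise estimate: for $u,v$ in the same block $J_i$, I construct two copies of the graphon SDE \eqref{dynboX} driven by a common Brownian motion and with initial data forming an optimal quadratic coupling of $(\mu_0^u,\mu_0^v)$, and then bound $\E[\sup_{t\leq T}|Y_t^u-Y_t^v|^2]$ by a standard Wasserstein--Gronwall argument. Since, by uniqueness in Proposition \ref{p:existence-uniqueness-graphonsysgen}, the laws of the two coupled processes are exactly $\mu^u$ and $\mu^v$, this trajectorial estimate is directly an upper bound for $\Wc_{2,T}^2(\mu^u,\mu^v)$.

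Concretely, writing $\boldsymbol\mu$ for the flow of marginals of the unique solution to \eqref{dynboX} given by Proposition \ref{p:existence-uniqueness-graphonsysgen}, I fix a Brownian motion $B$ and a pair $(\eta^u,\eta^v)$ realising $\Wc_2(\mu_0^u,\mu_0^v)$, independent of $B$, and consider
\begin{align}
Y_t^u = \eta^u + \int_0^t b\bigl(Y_s^u,[G\boldsymbol\mu_s]^u\bigr)\,\d s + \int_0^t \sigma\bigl(Y_s^u,[G\boldsymbol\mu_s]^u\bigr)\,\d B_s,
\end{align}
and the analogous equation for $Y^v$ with the same $B$. Subtracting the two equations, invoking Assumption \ref{Hyp:bsig}, together with Doob/BDG and Cauchy--Schwarz, yields the standard inequality
\begin{align}
\E\Bigl[\sup_{s\leq t}|Y_s^u-Y_s^v|^2\Bigr] \leq C\,\Wc_2^2(\mu_0^u,\mu_0^v) + C\int_0^t\E\Bigl[\sup_{r\leq s}|Y_r^u-Y_r^v|^2\Bigr]\d s + C\int_0^t \Wc_2^2\bigl([G\boldsymbol\mu_s]^u,[G\boldsymbol\mu_s]^v\bigr)\,\d s.
\end{align}

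The next step is to control the kernel term. Writing the signed measure $[G\boldsymbol\mu_s]^u - [G\boldsymbol\mu_s]^v = \int_I h(u,v,w)\,\mu_s^w(\cdot)\,\d w$ with $h(u,v,w) := \|G(u,\cdot)\|_1^{-1}G(u,w) - \|G(v,\cdot)\|_1^{-1}G(v,w)$, the inequality \eqref{eq:w-control-tv} applied with $x_0=0$ gives
\begin{align}
\Wc_2^2\bigl([G\boldsymbol\mu_s]^u,[G\boldsymbol\mu_s]^v\bigr) \leq 2\int_I |h(u,v,w)|\Bigl(\int_{\R^d}|x|^2\mu_s^w(\d x)\Bigr)\d w,
\end{align}
and the uniform moment bound $\sup_{w\in I}\sup_{s\leq T}\int_{\R^d}|x|^2\mu_s^w(\d x) < \infty$ from Proposition \ref{p:existence-uniqueness-graphonsysgen} reduces the whole problem to estimating $\int_I|h(u,v,w)|\,\d w$. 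Decomposing
\begin{align}
h(u,v,w) = \frac{\bigl(G(u,w)-G(v,w)\bigr)\|G(v,\cdot)\|_1 + G(v,w)\bigl(\|G(v,\cdot)\|_1 - \|G(u,\cdot)\|_1\bigr)}{\|G(u,\cdot)\|_1\,\|G(v,\cdot)\|_1},
\end{align}
I treat the numerator blockwise: under Assumption \ref{hyp:uni-cont}, uniform continuity of $G$ on each $J_i\times J_j$ yields $\sup_{w\in J_j}|G(u,w)-G(v,w)|\to 0$ as $v\to u$ in $J_i$, hence $\int_I|h(u,v,w)|\d w\to 0$ by summing over $j$; under Assumption \ref{hyp:lipmu0}, the Lipschitz bound gives $\int_I|h(u,v,w)|\d w\leq C|u-v|$ directly.

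Plugging these bounds together with the corresponding control on $\Wc_2^2(\mu_0^u,\mu_0^v)$ (either vanishing, or $\leq \kappa^2|u-v|^2$) into the trajectorial inequality and applying Gronwall's lemma yields statement (1) and (2) respectively. The main technical obstacle is the control of the denominator $\|G(u,\cdot)\|_1\,\|G(v,\cdot)\|_1$ appearing in $h$: it requires a uniform lower bound on the degree on each block $J_i$, which is implicitly guaranteed by Assumption \ref{Hyp:Gposuni} or by the continuity/Lipschitz regularity of $G$ over the closure of each $J_i$; without such a lower bound the $L^1$-norm of $h$ over $w$ cannot be controlled and the Gronwall step fails.
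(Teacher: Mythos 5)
Your proof is correct and takes essentially the same route as the paper's: a synchronous coupling of the two equations driven by a common Brownian motion, a Gronwall estimate, and the control of $\Wc_2^2([G\boldsymbol\mu_s]^u,[G\boldsymbol\mu_s]^v)$ via the weighted total-variation bound \eqref{eq:w-control-tv} combined with the blockwise regularity of $G$ and $\boldsymbol\mu_0$. Your closing observation about needing a lower bound on $\|G(u,\cdot)\|_1$ over each block is a fair point that the paper's appendix also leaves implicit.
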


\begin{proof}
See Appendix \ref{sec:appencont}. 
\end{proof}

\section{Convergence of the graphon mean-field system}
\label{s:lln+poc}

This section is devoted to the convergence of the $N$-particle system \eqref{sysN} towards the graphon mean-field system \eqref{dynboX} when $N$ goes to infinity.

Let us denote by $X^{u,N}$ $=$ $X^{i,N}$, $u$ $\in$ $I_i$, $i$ $\in$ $\llbracket 1,N\rrbracket$, $\boX^N$ $=$ $(X^{u,N})_{u\in I}$, $\bodelta^N$ $=$ $(\delta_{X^{u,N}})_{u \in I}$, 
and observe that the dynamics of the $N$-particle system can be rewitten as
\begin{align} \label{graphon-NL2}
\d X_t^{i,N} & = \;  \mfb^{u_i}_N(\boX^N_t,\bodelta_t^N) \d t +  
\mfs^{u_i}_N(\boX^N_t,\bodelta_t^N) \d W_t^{u_i}, \;  0 \leq t \leq T,
\end{align}
where  $\mfb^u_N\left( \bolx,\bomu\right) = \;  b\left(x^u, [G_N \boldsymbol{\mu}]^u \right)$, 
$\mfs^u_N( \bolx,\bomu) \; = \;  \sigma \left(x^u, [G_N \boldsymbol{\mu}]^u \right)$,  $u\in I$, 
for  $\bolx$ $=$ $(x^u)_{u\in I}$ $\in$ $L_\lambda^2(I,\R^d)$ and $\boldsymbol{\mu} \in \Pc_2(\R^d)^I$. 

\begin{Remark}
    The family of Brownian motions $\{W^{u_i}\}_{i\in \llbracket 1,N\rrbracket}$ in equation \eqref{graphon-NL2} is constructed by sampling the e.p.i. variables $\{W^{u}\}_{u\in I}$. In particular, $\{W^{u_i}\}_{i=1, \dots, N}$ are i.i.d. with probability 1, in the sense of
    \[
    \lambda\left(\left\{u_i, i\in \llbracket 1,N\rrbracket \text{ such that } \{W^{u_i}\}_{i\in \llbracket 1,N\rrbracket} \text{ are mutually independent}\right\}\right) = 1,
    \]
    for every $N$.
\end{Remark}

\subsection{Law of large numbers}

Denote by $\hat\bodelta$ $=$ $(\hat\delta^u)_{u\in I}$, with $\hat\delta^u$ $=$ $\delta_{X^{u_j}}$ for $u$ $\in$ $I_j$, and 
$\hat\bomu$ $=$ $(\hat\mu^u)_{u\in I}$  with  $\hat\mu^u$ $=$ $\mu^{u_j}$ for $u$ $\in$ $I_j$. 
The following result adapts to our graphon framework the classical result in \cite{fournier-guillin} about the rate of convergence in Wasserstein distance of the empirical measure of i.i.d. random variables.  

\begin{Lemma} \label{lemFG} 
Let Assumptions \ref{Hyp:bsig} and \ref{ass:initial-condition} hold, let $i \in \llbracket 1,N\rrbracket$. 
Then, there is some positive constant $C$ s.t. for all $N$ $\in$ $\N^*$, $t$ $\in$ $[0,T]$,
\begin{align} \label{estimFG1}
\E\big[ \Wc_2^2([G_N\hat\bodelta_t]^{u_i}, [G_N\hat\bomu_t]^{u_i}) \big] & \leq \; \frac{C}{\sqrt{\|G_N(u_i,\cdot)\|_1}} M_N 
\end{align}
where the rate of convergence $M_N$ depends on the dimension $d$ and the integrability parameter $\epsilon$ in 
Assumption \ref{ass:initial-condition}, namely:
\begin{align}
M_N & = \;      
    \begin{cases}
        N^{-1/2} + N^{-\epsilon/(2+\epsilon)} &\text{if $d/2<2$ and $2+\epsilon\ne 4$}\\
        N^{-1/2}\log(1+N) + N^{-\epsilon/(2+\epsilon)} &\text{if $d/2=2$ and $2+\epsilon\ne4$}\\
        N^{-2/d} + N^{-\epsilon/(2+\epsilon)} &\text{if $d/2>2$ and $2+\epsilon\ne\frac{d}{d-2}$}.
    \end{cases}
\end{align}
\end{Lemma}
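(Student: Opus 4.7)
The plan is to adapt the classical Fournier-Guillin argument \cite{fournier-guillin} to the present setting of \emph{weighted} independent samples with \emph{heterogeneous} laws. Using the step-graphon structure together with the fact that $\hat\delta^u_t = \delta_{X_t^{u_j}}$ is constant on $I_j$, the two measures under comparison take the explicit form
\begin{align}
[G_N\hat\bodelta_t]^{u_i} \; = \; \sum_{j=1}^N p_{ij}\,\delta_{X_t^{u_j}}, \qquad
[G_N\hat\bomu_t]^{u_i} \; = \; \sum_{j=1}^N p_{ij}\,\mu_t^{u_j},
\end{align}
where $p_{ij} := G_N(u_i,u_j)/N_i$ and $N_i := \sum_k G_N(u_i,u_k) = N\|G_N(u_i,\cdot)\|_1$, so that $\sum_j p_{ij} = 1$ and, crucially, $\max_j p_{ij} \leq 1/N_i$. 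Denote these measures by $\hat\alpha$ and $\alpha$ respectively. By Proposition \ref{p:existence-uniqueness-graphonsysgen}, the family $(X_t^{u_j})_j$ is e.p.i., hence mutually independent on the discrete grid almost surely, and by propagating Assumption \ref{ass:initial-condition} through \eqref{dynboX} under Assumption \ref{Hyp:bsig} via a standard Gronwall argument, it satisfies the uniform-in-$u$ bound $\sup_u \E\sup_{0 \le t \le T}|X_t^u|^{2+\epsilon} < \infty$.

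The core ingredient is a weighted variance identity. For any Borel set $Q \subset \R^d$, using independence,
\begin{align}
\E\bigl[|\hat\alpha(Q)-\alpha(Q)|^2\bigr] \; = \; \sum_{j=1}^N p_{ij}^2\,\mathrm{Var}\bigl(\mathbf{1}_{X_t^{u_j}\in Q}\bigr) \; \leq \; \Bigl(\max_k p_{ik}\Bigr)\sum_{j=1}^N p_{ij}\,\mu_t^{u_j}(Q) \; \leq \; \frac{\alpha(Q)}{N_i}.
\end{align}
This is the precise analog of the i.i.d. estimate $\mathrm{Var}(\hat\mu_N(Q)) \leq \mu(Q)/N$ that drives the Fournier-Guillin proof, with $N_i$ playing the role of an \emph{effective sample size} and the mixture $\alpha$ playing the role of the common law. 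Combined with the trivial bound $\E|\hat\alpha(Q)-\alpha(Q)| \leq 2\alpha(Q)$ it yields $\E|\hat\alpha(Q)-\alpha(Q)| \leq \min(2\alpha(Q),\sqrt{\alpha(Q)/N_i})$.

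With this ingredient in hand, I would follow \cite{fournier-guillin} essentially verbatim: decompose $\R^d = B_R \cup B_R^c$, partition $B_R$ into nested dyadic grids of cubes of side $\sim R\,2^{-\ell}$, use an estimate of the form
\begin{align}
\Wc_2^2(\hat\alpha,\alpha) \; \leq \; C(d)\sum_{\ell\geq 0} R^2\,2^{-2\ell}\sum_{Q \in \Pc_\ell} |\hat\alpha(Q)-\alpha(Q)| \; + \; \text{tail on }B_R^c,
\end{align}
take expectation, plug in the variance estimate, sum the geometric series and optimize jointly in $\ell$ and $R$. The tail on $B_R^c$ is controlled by the $(2+\epsilon)$-moment of $\alpha$, which is uniformly bounded since $\alpha$ is a convex combination of the $\mu_t^{u_j}$ whose $(2+\epsilon)$-moments are uniformly bounded. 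This produces a rate of order $M_{N_i}$ expressed in terms of the effective sample size $N_i$.

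The last step is to convert $M_{N_i}$ into the form $\|G_N(u_i,\cdot)\|_1^{-1/2} M_N$ asserted in the lemma. For $d/2 \leq 2$ the identity $N_i^{-1/2} = \|G_N(u_i,\cdot)\|_1^{-1/2}\,N^{-1/2}$ matches the leading term of the target bound exactly; for $d/2 > 2$ one uses the elementary inequality $\|G_N(u_i,\cdot)\|_1^{-2/d} \leq \|G_N(u_i,\cdot)\|_1^{-1/2}$, valid since $\|G_N(u_i,\cdot)\|_1 \leq 1$ and $d\geq 4$; the moment tail term $N_i^{-\epsilon/(2+\epsilon)}$ is absorbed analogously. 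In my view the main obstacle is not in any single algebraic step, but rather in identifying the correct weighted variance bound $\E|\hat\alpha(Q)-\alpha(Q)|^2 \leq \alpha(Q)/N_i$ and recognizing $N_i$ as the relevant effective sample size; once this is done, the rest is a faithful transcription of the Fournier-Guillin machinery to the weighted non-i.i.d. framework.
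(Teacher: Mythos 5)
Your proposal is correct and follows essentially the same route as the paper's proof: the same key weighted variance bound $\E\big[|\hat\alpha(Q)-\alpha(Q)|^2\big] \le \alpha(Q)/N_i$ obtained from independence of the $X_t^{u_j}$ and $\xi_{ij}^N\le 1$, combined with the trivial bound $2\alpha(Q)$, then fed into the Fournier--Guillin dyadic decomposition and converted via $N_i = N\|G_N(u_i,\cdot)\|_1$. The only cosmetic difference is that the paper extracts the factor $\|G_N(u_i,\cdot)\|_1^{-1/2}$ inside the $\min$ before performing the sums (using $\|G_N(u_i,\cdot)\|_1\le 1$), rather than first deriving a rate $M_{N_i}$ and converting it to $\|G_N(u_i,\cdot)\|_1^{-1/2}M_N$ at the end as you do; both computations yield the stated bound.
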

\begin{proof}
Following the notation in \cite{fournier-guillin}, we introduce $B_0:=(-1,1]^d$ and, for $n\ge 1$, $B_n:=(-2^n,2^n]^d\setminus(-2^{n-1},2^{n-1}]^d $. For $l\ge 0$, $\Pc_l$ will denote the natural partition of $(-1,1]^d$ into $2^{dl}$ translations of $(-2^{-l},2^{-l}]^d$.
We start by observing that Lemma 5 and Lemma 6 in \cite{fournier-guillin} imply that, for all pairs of probability measures $\mu,\nu$ on $\R^d$:
\[
\Wc_2^2(\mu,\nu) \le K_d\underbrace {\sum_{n\ge 0}2^{2n}\sum_{l\ge 0}2^{-2l}\sum_{F\in\Pc_l}|\mu(2^nF\cap B_n)-\nu(2^nF\cap B_n)|}_{=:\alpha(\mu,\nu)},
\]
where $K_d$ is a constant depending only on the dimension $d$.
So it is sufficient to study $\E[\alpha(\mu,\nu)]$.
Now, we observe that for a Borel subset $A\subset\R^d$ we have for all $i$ $\in$ $\llbracket 1,N\rrbracket$, 
$t$ $\in$ $[0,T]$, 
\begin{align} \label{step1}
\E\big[ \big|[G_N\hat\bodelta_t]^{u_i}(A) - [G_N\hat\bomu_t]^{u_i}(A)\big| \big] 
&\le \; 
\min\bigg\{ 2[G_N\hat\bomu_t]^{u_i}(A), 
\sqrt{\frac{[G_N\hat\bomu_t]^{u_i}(A)}{N_i}} \bigg\}.
\end{align}
This comes on the one hand from the inequality 
\begin{align*}
   \E\big[ \big|[G_N\hat\bodelta_t]^{u_i}(A) - [G_N\hat\bomu_t]^{u_i}(A)\big| \big] &\le \frac{1}{N_i}\sum_{j=1}^N \xi_{ij}^N\E\big[|\delta_t^j(A)-\mu_t^{u_j}(A)|\big]\\
   &\le \frac{2}{N_i} \sum_{j=1}^N\xi_{ij}^N\mu_t^{u_j}(A) = 2[G_N\hat\mu_t]^{u_i}(A),
\end{align*}
and, on the other hand, from the fact that  
\begin{align*}
    & \E\big[ \big|[G_N\hat\bodelta_t]^{u_i}(A) - [G_N\hat\bomu_t]^{u_i}(A)\big| \big]^2 \le \E\big[ \big|[G_N\hat\bodelta_t]^{u_i}(A) - [G_N\hat\bomu_t]^{u_i}(A)\big|^2 \big]\\
    & = \frac{1}{N_i^2}\sum_{j=1}^N(\xi_{ij}^N)^2\E\big[|\delta_t^{u_j}(A)-\mu_t^{u_j}(A)|^2\big]\\
    & = \frac{1}{N_i^2}\sum_{j=1}^N(\xi_{ij}^N)^2\bigg(\E\big[\I_A(X_t^{u_j})\big] + (\mu_t^{u_j}(A))^2 - 2\mu_t^{u_j}(A)\E\big[\I_A(X_t^{u_j})\big]\bigg)\\
    & = \frac{1}{N_i^2}\sum_{j=1}^N(\xi_{ij}^N)^2\big( \mu_t^{u_j}(A) - (\mu_t^{u_j}(A))^2 \big) \\
    &\le \frac{1}{N_i^2}\sum_{j=1}^N(\xi_{ij}^N)^2\mu_t^{u_j}(A) \; \le \;  \frac{[G_N\hat\mu_t]^{u_i}(A)}{N_i},
\end{align*}
where we have used the independence of $X^{u_j}$, $j=1,\dots,N$.

Using \eqref{step1}, we can deduce that, for all $n\ge 0$, $l\ge 0$:
\begin{align}
&\sum_{F\in\Pc_l}\E\big[ \big|[G_N\hat\bodelta_t]^{u_i}(2^nF\cap B_n) - [G_N\hat\bomu_t]^{u_i}(2^nF\cap B_n)\big| \big] \nonumber \\
& \le \sum_{F\in\Pc_l} \min \bigg\{2[G_N\hat\bomu_t]^{u_i}(2^nF\cap B_n),\sqrt{\frac{[G_N\hat\bomu_t]^{u_i}(2^nf\cap B_n)}{N_i}}  \bigg\} \nonumber  \\
&\le \min \big\{ 2[G_N\hat\bomu_t]^{u_i}(B_n), 2^{dl/2}\sqrt{[G_N\hat\bomu_t]^{u_i}(B_n)/N_i}\big\}, \label{step2}
\end{align}
where in the last inequality of \eqref{step2}, we have used that $2\sum_{F\in\Pc_l}[G_N\hat\mu_t]^{u_i}(2^nF\cap B_n) = 2[G_N\hat\mu_t]^{u_i}(B_n)$, and that, using the Cauchy-Schwarz inequality and the fact that $\#(\Pc_l) = 2^{dl}$,
\[
\sum_{F\in\Pc_l}\sqrt{\frac{[G_N\hat\mu_t]^{u_i}(2^nF\cap B_n)}{N_i}} \le \sqrt{\sum_{F\in\Pc_l}\frac{[G_N\hat\mu_t]^{u_i}(2^nF\cap B_n)}{N_i}}2^{dl/2}=\sqrt{\frac{[G_N\hat\mu_t]^{u_i}(B_n)}{N_i}}2^{dl/2}.
\]
Now, under the condition that $\sup_{u\in I}\int_{\Cc^d}|x|^{2+\epsilon}\mu^u(\d x)<\infty$, 
we have that 
\begin{align*}
\int_{\Cc^d}|x|^{2+\epsilon}[G_N\hat\bomu]^{u_i}(\d x) = \frac{1}{N_i}\sum_{j=1}^N\xi_{ij}^N\int_{\Cc^d}|x|^{2+\epsilon}\mu^{u_j}(\d x) <C,
\end{align*}
with $C$ a constant that doesn't depend on $N$ or $i$. 
As in the proof of Theorem 1 in \cite{fournier-guillin}, 
without loss of generality we can suppose $C=1$, i.e.,  $\int_{\R^d}|x|^{2+\epsilon}[G_N\hat\bomu_t]^{u_i}(\d x)\le 1$, for all $N$ and $t\in[0,T]$. 
This implies that 
\begin{align}\label{step3}
[G_N\hat\bomu_t]^{u_i}(B_n)\le 2^{-(2+\epsilon)(n-1)}.
\end{align}
In fact we have that:
\begin{align*}
    1 \ge \int_{\R^d}|x|^{2+\epsilon}[G_N\hat\bomu_t]^{u_i}(\d x) \ge \int_{B_n}|x|^{2+\epsilon}[G_N\hat\bomu_t]^{u_i}(\d x) \ge [G_N\hat\bomu_s]^{u_i}(B_n)2^{(n-1)(2+\epsilon)},
\end{align*}
since $|x|^{2+\epsilon}\ge 2^{(n-1)(2+\epsilon)}$ on $B_n$. 
So, \eqref{step2} together with \eqref{step3} give:
\begin{align*}
\E\big[ \alpha([G_N\hat\bodelta_t]^{u_i}, [G_N\hat\bomu_t]^{u_i}) \big] & \le K\sum_{n\ge 0}2^{2n}\sum_{l\ge 0} 2^{-2l}\min \big\{ 2[G_N\hat\bomu_t]^{u_i}(B_n), 2^{dl/2}\sqrt{[G_N\hat\bomu_t]^{u_i}(B_n)/N_i}\big\}\\
&\le K\sum_{n\ge 0}2^{2n}\sum_{l\ge 0}2^{-2l}\min\{2^{-(2+\epsilon)(n-1)+1}, 2^{dl/2}(2^{-(2+\epsilon)(n-1)}/N_i)^{1/2}\}\\
& \le K 2^{3+\epsilon} \sum_{n\ge 0}2^{2n}\sum_{l\ge 0}2^{-2l} \min\Big\{ 2^{-(2+\epsilon)n},2^{dl/2}\sqrt{\frac{2^{-(2+\epsilon)n}}{N_i}} \Big\}.
\end{align*}
We recall now that $N_i=N\|G_N(u_i,\cdot)\|_1$, and substituting in the above inequality we get
\begin{align}
\E\big[ \alpha([G_N\hat\bodelta_t]^{u_i}, [G_N\hat\bomu_t]^{u_i}) \big] & \le K 2^{3+\epsilon} \sum_{n\ge 0}2^{2n}\sum_{l\ge 0}2^{-2l} \min\Big\{ 2^{-(2+\epsilon)n},2^{dl/2}\sqrt{\frac{2^{-(2+\epsilon)n}}{N\|G_N(u_i,\cdot)\|_1}}\Big\}\\
& \le K 2^{3+\epsilon} \sum_{n\ge 0}2^{2n}\sum_{l\ge 0}2^{-2l} \min\Big\{ \frac{2^{-(2+\epsilon)n}}{\sqrt{\|G_N(u_i,\cdot)\|_1}},2^{dl/2}\sqrt{\frac{2^{-(2+\epsilon)n}}{N\|G_N(u_i,\cdot)\|_1}}\Big\}\\
& \le \frac{K}{\sqrt{\|G_N(u_i,\cdot)\|_1}} 2^{3+\epsilon} \sum_{n\ge 0}2^{2n}\sum_{l\ge 0}2^{-2l} \min\Big\{ 2^{-(2+\epsilon)n},2^{dl/2}\sqrt{\frac{2^{-(2+\epsilon)n}}{N}}\Big\},
\end{align}
where the second inequality is obtained by observing that $\frac{1}{\sqrt{\|G_N(u_i,\cdot)\|_1}}\ge 1$.

The rest of the proof follows in the same way as in the proof of Theorem 1 in \cite{fournier-guillin}, and we get the same rate of convergence as stated in \eqref{estimFG1}.
\end{proof}

\begin{Theorem} \label{theoLLN}
Let Assumptions \ref{Hyp:bsig}, \ref{Hyp:Gpos}(ii), \ref{ass:initial-condition} and  \ref{hyp:uni-cont} hold. Suppose that $\Vert G_N - G \Vert_{\square}$ $\rightarrow$ $0$, when $N$ goes to infinity, and $X_0^{i,N}$ $=$ $\zeta^{u_i}$, $i$ $\in$ $\llbracket 1,N\rrbracket$. 
Then, 
\begin{align}
\lim_{N\rightarrow\infty} 
\frac{1}{N} \sum_{i=1}^N \E\Big[ \sup_{0 \leq t\leq T} \big| X_t^{i,N} - X_t^{u_i} \big|^2 \Big]  &= 
\lim_{N\rightarrow\infty}  \frac{1}{N} \sum_{i=1}^N  \E\Big[\int_0^t\Wc_2^2([G_N\bodelta^N_s]^{u_i},[G\bomu_s]^{u_i})\d s \Big] \; = 
\; 0. 
\end{align}
\end{Theorem}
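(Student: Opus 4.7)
\textbf{Proof plan for Theorem \ref{theoLLN}.} The strategy is the classical synchronous coupling argument, where we pair each particle $X^{i,N}$ with the corresponding graphon process $X^{u_i}$ driven by the \emph{same} Brownian motion $W^{u_i}$, and then close a Gronwall-type inequality on the average squared error. First I would apply It\^o's formula to $|X_t^{i,N} - X_t^{u_i}|^2$, use the Burkholder--Davis--Gundy inequality to handle the stochastic integral, and invoke the Lipschitz property of $b$ and $\sigma$ from Assumption \ref{Hyp:bsig} to obtain, for every $i$ and $t\in[0,T]$,
\begin{align}
\E\Big[\sup_{s\le t}|X_s^{i,N} - X_s^{u_i}|^2\Big] \;\le\; C\int_0^t \E\big[|X_s^{i,N} - X_s^{u_i}|^2\big]\,\d s \;+\; C\int_0^t \E\big[\Wc_2^2([G_N\bodelta_s^N]^{u_i},[G\bomu_s]^{u_i})\big]\,\d s.
\end{align}
Once both expected quantities in the statement are shown to vanish, one appeals to Gronwall's lemma after averaging over $i$.

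The heart of the argument is the three-way triangle decomposition
\begin{align}
\Wc_2([G_N\bodelta_s^N]^{u_i},[G\bomu_s]^{u_i}) \;\le\; A_1^i(s)+A_2^i(s)+A_3^i(s),
\end{align}
where $A_1^i=\Wc_2([G_N\bodelta_s^N]^{u_i},[G_N\hat\bodelta_s]^{u_i})$ measures how far the physical particles are from the graphon processes, $A_2^i=\Wc_2([G_N\hat\bodelta_s]^{u_i},[G_N\hat\bomu_s]^{u_i})$ is the empirical--law distance, and $A_3^i=\Wc_2([G_N\hat\bomu_s]^{u_i},[G\bomu_s]^{u_i})$ carries the graphon approximation error. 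For $A_1^i$, a synchronous coupling at matched indices yields $(A_1^i)^2\le \tfrac{1}{N_i}\sum_{j=1}^N \xi_{ij}^N|X_s^{j,N}-X_s^{u_j}|^2$, and after averaging in $i$ and swapping sums one exploits the symmetry of $G_N$ together with Assumption \ref{Hyp:Gpos}(ii), which controls $\sum_i \xi_{ij}^N/N_i$ as a Riemann approximation of $\int_I \|G(v,\cdot)\|_1^{-1}\,\d v<\infty$; this is precisely the term that feeds back into Gronwall. For $A_2^i$, Lemma \ref{lemFG} directly gives $\E[(A_2^i)^2]\le CM_N/\sqrt{\|G_N(u_i,\cdot)\|_1}$, and averaging followed by Cauchy--Schwarz yields a bound $CM_N\,\big(\int_I \|G(u,\cdot)\|_1^{-1}\d u\big)^{1/2}\to 0$ under Assumption \ref{Hyp:Gpos}(ii).

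The main obstacle is the third term $A_3^i$, which encodes the cut-norm convergence $\|G_N-G\|_\square\to 0$. Here I would combine Assumption \ref{hyp:uni-cont} (giving continuity of $u\mapsto \mu^u$ on each interval $J_k$ via Proposition \ref{prop-continuity}) with the cut-norm convergence to show that $\frac{1}{N}\sum_i \E[\int_0^T \Wc_2^2([G_N\hat\bomu_s]^{u_i},[G\bomu_s]^{u_i})\,\d s]\to 0$. The key technical step is to bound $\Wc_2^2$ using the dual Kantorovich formulation \eqref{eq:kantorovitch} or the total variation control \eqref{eq:w-control-tv}, and then to view the integrand $(v,y)\mapsto f(y)G_N(u,v)\mu^v(\d y)$ as a function whose integral against $G_N-G$ can be controlled uniformly over Lipschitz test functions by passing first to a piecewise-constant approximation (using continuity of $\mu^v$ on each $J_k$), and then applying the cut-norm estimate to simple functions; the uniform integrability provided by $\sup_u \E|X^u|^{2+\epsilon}<\infty$ (Proposition \ref{p:existence-uniqueness-graphonsysgen}) allows truncation to compact sets.

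Putting these three bounds together inside the Gronwall inequality gives the vanishing of $\frac{1}{N}\sum_i \E[\sup_{t\le T}|X_t^{i,N}-X_t^{u_i}|^2]$, and reinjecting this in the decomposition of the Wasserstein term yields the second limit. The hardest and most delicate step is handling $A_3^i$: reconciling cut-norm convergence (which is a very weak metric) with Wasserstein-$2$ distance on probability measures requires carefully leveraging the blockwise regularity supplied by Assumption \ref{hyp:uni-cont}, and this is where most of the technical work will lie.
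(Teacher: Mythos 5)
Your architecture coincides with the paper's: synchronous coupling and Gronwall, the chain of intermediate measures $[G_N\bodelta^N]^{u_i}\to[G_N\hat\bodelta]^{u_i}\to[G_N\hat\bomu]^{u_i}\to[G\bomu]^{u_i}$ (the paper inserts one more stop, $[G_N\bomu]^{u_i}$, so that your $A_3$ is its $\Ec_N^i(2)+(II_t^{u_i})$, but the content — continuity of $u\mapsto\mu^u$ from Proposition \ref{prop-continuity} plus dominated convergence for the label discretization, and the total-variation bound \eqref{eq:w-control-tv} plus the cut-norm/$\Vert\cdot\Vert_{\infty\to1}$ estimate and a simple-function approximation for the graphon error — is the same), and Lemma \ref{lemFG} for the empirical-versus-law term.

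There is, however, one genuine gap, and it affects both your $A_1$ and $A_2$ estimates. After averaging over $i$, both terms carry a factor of the form $\frac1N\sum_i\Vert G_N(u_i,\cdot)\Vert_1^{-1}$ (for $A_1$ this is what makes the Gronwall inequality close on $\frac1N\sum_j\E[\sup_t|X^{j,N}_t-X^{u_j}_t|^2]$; for $A_2$ it appears after Cauchy--Schwarz on $\Vert G_N(u_i,\cdot)\Vert_1^{-1/2}$). You claim this is controlled by Assumption \ref{Hyp:Gpos}(ii) ``as a Riemann approximation of $\int_I\Vert G(v,\cdot)\Vert_1^{-1}\d v$''. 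That step does not go through as stated: Assumption \ref{Hyp:Gpos}(ii) concerns the \emph{limit} graphon $G$, while the quantity you must bound involves the degrees of $G_N$, and convergence in cut norm gives no pointwise (nor even uniform-in-$i$) control of $\Vert G_N(u_i,\cdot)\Vert_1$ in terms of $\Vert G(u_i,\cdot)\Vert_1$ --- a vanishing fraction of vertices may have degree much smaller than their limit degree, making $\frac1N\sum_i\Vert G_N(u_i,\cdot)\Vert_1^{-1}$ blow up. The paper's proof devotes a nontrivial preliminary step to exactly this point: it introduces the good set $A_N=\{i:\Vert G_N(u_i,\cdot)\Vert_1\ge\frac12\Vert G(u_i,\cdot)\Vert_1\}$, shows via cut-norm convergence and the integrability of $\Vert G(u,\cdot)\Vert_1^{-1}$ that $|A_N^c|/N\to0$ and that $\sup_N\frac1N\sum_{i\in A_N}\Vert G_N(u_i,\cdot)\Vert_1^{-1}<\infty$, and then discards the bad indices using the a priori uniform second-moment bound $\sup_N\sup_i\E[\sup_t|X^{i,N}_t-X^{u_i}_t|^2]<\infty$, which contributes only $o_N(1)$ to the average. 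Without this (or an equivalent device) your averaged bounds for $A_1$ and $A_2$ do not close, so you should add this good-set/bad-set decomposition before running the Gronwall argument.
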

\begin{proof} 
In the sequel, $K$ denotes a generic constant independent of $N$ (depending only on $b,\sigma$, $G$ and $T$) that may vary from line to line. 
Recall the notations: $\bomu$ $=$ $(\P_{X^u})_{u\in I}$ $\in$ $\Mc$, 
$\bodelta$ $=$ $(\delta_{X^{u}})_{u\in I}$, $\bodelta^N$ $=$ $(\delta_{X^{u,N}})_{u\in I}$.  
From the dynamics \eqref{dynboX}, \eqref{graphon-NL2}, and the Lipschitz conditions on $b,\sigma$ under Assumption \ref{Hyp:bsig}, we have for $i$ $\in$ $\llbracket 1,N\rrbracket$, $t$ $\in$ $[0,T]$, 
\begin{align*}
        \E\Big[\sup_{s\in [0,t]}|X^{i,N}_s-X^{u_i}_s|^2\Big] & \le \; 
        2\E\Big[\sup_{s\in[0,t]}\int_0^s |b(X_r^{i,N},[G_N\bodelta^N_r]^{u_i})-b(X_r^{u_i},[G\bomu_r]^{u_i})|^2\d r\Big] \\
        &  \quad  + \;  2\E\Big[\sup_{s\in[0,t]}\Big|\int_0^s(\sigma(X^{i,N}_r,[G_N\bodelta_r^N]^{u_i})
        -\sigma(X^{u_i}_r,[G\bomu_r]^{u_i}))\d W^{u_i}_r \Big|^2\Big] \\
         & \leq \;  K\E\Big[\int_0^t\sup_{r\le s}|X^{i,N}_r-X^{u_i}_r|^2 \d s\Big] 
          \; + \;  K\E\Big[\int_0^t\Wc_2^2([G_N\bodelta^N_s]^{u_i},[G\bomu_s]^{u_i})\d s\Big].
    \end{align*}
and thus, by  Gronwall lemma:
\begin{align} 
    \E\Big[\sup_{s\in[0,t]}|X_s^{i,N}-X_s^{u_i}|^2\Big] &  \le \;  
    K\E\Big[\int_0^t\Wc_2^2([G_N\bodelta^N_s]^{u_i},[G\bomu_s]^{u_i})\d s \Big] \nonumber \\
    & \le   K \underbrace{\E\Big[\int_0^t\Wc_2^2([G_N\bodelta^N_s]^{u_i},[G_N\bomu_s]^{u_i})\d s \Big]}_{(I_t^{u_i})} \nonumber \\
& \quad + K \underbrace{\int_0^t \Wc_2^2([G_N\bomu_s]^{u_i},[G\bomu_s]^{u_i}) \d s}_{(II_t^{u_i})}.  \label{Xgron1}
\end{align} 

We now claim that Assumption \ref{Hyp:Gpos}(ii) and the convergence in cut norm of $G_N$ to $G$ implies that there exists a maximum of $o(N)$ indexes $i\in\llbracket1,N\rrbracket$, i.e., a set $B_N$ of indexes defined for every $N$ with $\lim_N |B_N|/N =0$, such that, setting $A_N:=\llbracket1,N\rrbracket\setminus B_N$, we have that:
\begin{equation}\label{Hyp:almost-Gposuni}
\sup_N\sum_{i\in A_N}\frac{1}{N_i} = \sup_N\frac{1}{N}\sum_{i\in A_N}\frac{1}{\|G_N(u_i,\cdot)\|_1}<\infty.
\end{equation}
Equivalently, we look for a sequence of sets $\{\tilde B_N\}\subseteq\Bc(I)$ s.t. for each $N$, $\tilde B_N$ is a union of subintervals $I_i$ for some $i\in\llbracket 1,N\rrbracket$, $\lambda_I(\tilde B_N)\to 0$ as $N\to\infty$ and, setting $\tilde A_N:=\tilde B_N^c$, it holds
\begin{align} \label{equiv} 
\sup_N\int_{\tilde A_N}\frac{1}{\|G_N(u,\cdot)\|_1}\d u \; < \;  \infty. 
\end{align} 
To prove the claim, let us consider the sets
\begin{align}
\tilde B_N := \{ u\in I : \|G_N(u,\cdot)\|_1<\frac 12 \|G(u,\cdot)\|_1\},
\end{align}
thus
\begin{align}
\tilde A_N &= \{u\in I: \|G_N(u,\cdot)\|_1\ge\frac 12\|G(u,\cdot)\|_1\}
\; = \;  \{ u\in I: \frac{1}{\|G_N(u,\cdot)\|_1}\le 2\frac{1}{\|G(u,\cdot)\|_1}\}.
\end{align}
Then, we can easily see that:
\begin{align}
\label{hyp:sup-N-degree}
    \sup_N\int_{\tilde A_N}\frac{1}{\|G_N(u,\cdot)\|_1}\d u \; \le \;  2\sup_N\int_{\tilde A_N}\frac{1}{\|G(u,\cdot)\|}\d u \; \le \; 2\int_I\frac{1}{\|G(u,\cdot)\|_1} \; < \; \infty,
\end{align}
thanks to Assumption \ref{Hyp:Gpos}(ii). We need to prove now that $\lambda_I(\tilde B_N)\to 0$. In order to do so, let us fix an arbitrary $\epsilon>0$. Assumption \ref{Hyp:Gpos}(ii) ensures that there exist some $\bar K\in\N$ s.t. for
\[
S_{\bar K} := \{u\in I: \frac{1}{\|G(u,\cdot)\|_1}>\bar K\}
\]
it holds $\lambda_I(S_{\bar K})<\epsilon$. We now rewrite:
\begin{align}
    \lambda_I(\tilde B_N) &= \;  \lambda_I(\tilde B_N\cap S_{\bar K}) + \lambda_I(\tilde B_N\cap S_{\bar K}^c)
    \; < \;  \epsilon + \lambda_I(\tilde B_N\cap S_{\bar K}^c).
\end{align}
To prove that $\lambda_I(\tilde B_N\cap S_{\bar K}^c)\to 0$, let us consider:
\begin{align}
    \int_{\tilde B_N\cap S_{\bar K}^c}(\|G(u,\cdot)\|_1-\|G_N(u,\cdot)\|_1)\d u 
    & > \;  \int_{\tilde B_N\cap S_{\bar K}^c}\frac 12\|G(u,\cdot)\|_1\d u \\
    &\ge \;  \frac{1}{2\bar K}\lambda_I(\tilde B_N\cap S_{\bar K}^c),
\end{align}
and observe that the convergence in cut norm implies that $\int_{\tilde B_N\cap S_{\bar K}^c}(\|G(u,\cdot)\|_1-\|G_N(u,\cdot)\|_1)\d u\to 0$, thus $\lambda_I(\int_{\tilde B_N\cap S_{\bar K}^c}(\|G(u,\cdot)\|_1-\|G_N(u,\cdot)\|_1)\d u)\to 0$. This concludes the proof of  
\eqref{Hyp:almost-Gposuni}.

Standard arguments for stochastic differential equations ensure that Assumption \ref{ass:initial-condition} is propagated in time, recall, e.g., Proposition \ref{p:existence-uniqueness-graphonsysgen}. It is in particular true that
\begin{equation}
    \sup_{N\in\N} \sup_{i\in\llbracket1,N\rrbracket}\E\Big[\sup_{s\in[0,t]}|X_s^{i,N}-X_s^{u_i}|^2\Big] < \infty.
\end{equation}
By using this last remark together with \eqref{Hyp:almost-Gposuni},  we can thus estimate
\[
\frac 1N \sum_{i\in A_N} \E\Big[\sup_{s\in[0,t]}|X_s^{i,N}-X_s^{u_i}|^2\Big]
\]
instead of $\frac 1N \sum_{i=1}^N \E\Big[\sup_{s\in[0,t]}|X_s^{i,N}-X_s^{u_i}|^2\Big]$.
Indeed, more explicitly, we have that:
\begin{align}
\frac{1}{N}\sum_{i=1}^N\E\left[ \sup_{0\le t\le T}|X^{i,N}_t-X^{u_i}_t|^2 \right] &= \frac{1}{N}\sum_{i\in A_N}\E\left[ \sup_{0\le t\le T}|X^{i,N}_t-X^{u_i}_t|^2 \right] \\
&\quad + \frac{1}{N}\sum_{i\in B_N}\E\left[ \sup_{0\le t\le T}|X^{i,N}_t-X^{u_i}_t|^2 \right]\\
&= \frac{1}{N}\sum_{i\in A_N}\E\left[ \sup_{0\le t\le T}|X^{i,N}_t-X^{u_i}_t|^2 \right] + o_{_N}(1). 
\end{align}

We now proceed with the estimation of $(I_t^{u_i})$ and $(II^{u_i})$.

\medskip

\noindent {\it Step 1: Estimation of $(I_t^{u_i})$}.
Denote by $\hat\bodelta$ $=$ $(\hat\delta^u)_{u\in I}$, with $\hat\delta^u$ $=$ $\delta_{X^{u_j}}$ for $u$ $\in$ $I_j$, and 
$\hat\bomu$ $=$ $(\hat\mu^u)_{u\in I}$  with  $\hat\mu^u$ $=$ $\mu^{u_j}$ for $u$ $\in$ $I_j$. We then write 
\begin{align}
    (I_t^{u_i}) &\le \; 3 \int_0^t\E\big[\Wc_2^2([G_N\bodelta_s^N]^{u_i},[G_N\hat\bodelta_s]^{u_i})\big]\d s \nonumber \\
    &\qquad\qquad + 3 \underbrace{\int_0^T \E\big[\Wc_2^2([G_N\hat\bodelta_s]^{u_i}, [G_N\hat\bomu_s]^{u_i})\big]\d s}_{\Ec^i_N(1)} + 3 \underbrace{\int_0^T\E\big[\Wc_2^2([G_N\hat\bomu_s]^{u_i},[G_N\bomu_s]^{u_i})\big]\d s}_{\Ec^i_N(2)} \nonumber \\
    &\le \; \frac{1}{\Vert G_N(u_i,\cdot)\Vert_1}\int_0^t \frac{1}{N}\sum_{j\in A_N} \E\big[ \sup_{0\leq r\leq s} |X_r^{N,j}-X_r^{u_j}|^2\big]\d s 
    + K\big( \Ec_N^i(1) + \Ec_N^i(2)\big) + o_{_N}(1).\\
    &
    \label{inter12}
\end{align}
The inequality
\[
\E\big[\Wc_2^2([G_N\bodelta_s^N]^{u_i},[G_N\hat\bodelta_s]^{u_i})\big] \leq \frac{1}{\Vert G_N(u_i,\cdot)\Vert_1} \frac{1}{N}\sum_{j\in A_N} \E\big[ \sup_{0\leq r\leq s} |X_r^{N,j}-X_r^{u_j}|^2\big] + o_N(1)
\]
can be established by using the characterization of the Wasserstein distance together with the definition of $G_N$. Indeed,
\begin{align}
&\E\big[\Wc_2^2([G_N\bodelta_s^N]^{u_i},[G_N\hat\bodelta_s]^{u_i})\big]  \\
= & \; \E\big[\sup\big\{\frac 1{N_i} \sum_{j=1}^N G_N(u_i, u_j) [f(X^{j,N}_s) + g(X^{u_j}_s)] \; : f(x) + g(y) \leq |x-y|^2\big\} \big]\\
= &\; \frac 1{N_i} \sum_{j=1}^N G_N(u_i, u_j) \E\big[\sup\big\{[f(X^{j,N}_s) + g(X^{u_j}_s)] \; : f(x) + g(y) \leq |x-y|^2\big\} \big]\\
= & \frac{1}{\Vert G_N(u_i,\cdot)\Vert_1}\frac 1N \sum_{j=1}^N G_N(u_i, u_j) \E\big[|X_s^{N,j}-X_s^{u_j}|^2\big].
\end{align}

To estimate $\frac{1}{N}\sum_{i\in A_N}\Ec_N^i(2)$, we observe that
\begin{align*}
    \frac{1}{N}\sum_{i\in A_N}\Wc_2^2([G_N\hat\bomu_s]^{u_i},[G_N\bomu_s]^{u_i}) 
    & \le \; \frac{1}{N}\sum_{i\in A_N} \|G_N(u_i,\cdot)\|_1^{-1} \int_I\Wc_2^2(\hat\mu_s^v,\mu_s^v)\d v \; \\
    &= \;  \frac{1}{N}\sum_{i\in A_N}\frac{N}{N_i}\sum_{j=1}^N\int_{I_j}\Wc_2^2(\mu_s^{u_j},\mu_s^v)\d v\\
    & \le \; \sum_{i\in A_N}\frac{1}{N_i}\int_I\Wc_{2,T}^2(\hat\mu^{u_j},\mu^v)\d v   
    \; \\
    &= \;  \sum_{i\in A_N}\frac{1}{N_i} \sum_{j=1}^N\int_{I_j} \Wc_{2,T}^2(\mu^{u_j},\mu^v)\d v\\
    &\le K \sum_{j=1}^N\int_{I_j} \Wc_{2,T}^2(\mu^{u_j},\mu^v)\d v
\end{align*}
where we use Lemma \ref{lemma:graphon-operator}
for the first inequality,  the fact that  $\Wc_2^2(\mu_s^u,\mu_s^v)\le\Wc_{2,T}^2(\mu^u,\mu^v)$ for all $u,v$ $\in$ $[0,1]$ for the second inequality, and \eqref{Hyp:almost-Gposuni} for the third inequality. 
From the continuity property of $u$ $\mapsto$ $\mu^u$ w.r.t. $\Wc_{2,T}$ in Proposition 
\ref{prop-continuity}, we deduce that $\sum_{j=1}^N\Wc_{2,T}^2(\mu^{u_j},\mu^v)\I_{I_j}(v)$
converges to zero $\lambda$-a.e., 
when $N$ goes to infinity. 
Moreover, since 
\begin{align}
\sup_{u,v}\Wc_{2,T}^2(\mu^u,\mu^v) & \le \;  
\sup_{u,v}\E\big[ \sup_{0\leq t\leq T} |X_t^u-X_t^v|^2 \big] \; \le 4\sup_u\E\big[ \sup_{0\leq t\leq T} |X_t^u|^2\big] \; < \; \infty, 
\end{align}
we conclude by the dominated convergence theorem that $\sum_{j=1}^N\int_{I_j}\Wc_{2,T}^2(\mu^{u_j},\mu^v)\d v$ converges to $0$, as $N\to\infty$. 
This implies that $\frac{1}{N}\sum_{i\in A_N}\Ec_N^i(2)\rightarrow 0$.

Similarly, to estimate $\frac{1}{N}\sum_{i\in A_N}^N\Ec_N^i(1)$,
it is enough to apply the estimate \eqref{estimFG1} 
to get convergence to $0$ as $N$ goes to infinity. 

\bigskip


\noindent {\it Step 2: Estimate of $(II_t^{u_i})$.} 
We control the Wasserstein distance by means of a weighted total variation distance, recall \eqref{eq:w-control-tv}, i.e., 
\begin{align}
\label{total_var1}
\Wc_2^2([G_N\bomu_s]^u,[G\bomu_s]^u) & \le \;  2 \int_{\R^d} |x|^2\big| [G\bomu_s]^u-[G_N\bomu_s]^u \big|(\d x),
\end{align}
where $\big| [G\bomu_s]^u-[G_N\bomu_s]^u \big|$ is the variation of the (signed) measure $[G\bomu_s]^u-[G_N\bomu_s]^u$, that we rewrite as
\begin{align*}
    \big| [G\bomu_s]^u-[G_N\bomu_s]^u \big| &= \bigg| \int_I \bigg( \frac{G(u,v)}{\|G(u,\cdot)\|_1} - \frac{G_N(u,v)}{\|G_N(u,\cdot)\|_1} \bigg)\mu_s^v \d v \bigg| =: |M|.
\end{align*}
We know that $|M|=M^{+}+M^{-}$, where $M^+$ and $M^-$ are respectively the positive and the negative parts of $M$. In particular, there exist two measurable sets $P$ and $N$ such that:
\begin{itemize}
    \item $P\cup N = \R^d$ and $P\cap N=\emptyset$,
    \item $P$ is a positive set and $N$ is a negative set,
\end{itemize}
and for every measurable set $B$ we have $M^+(B) = M(B\cap P)$ and $M^-(B) = -M(B\cap N)$.
    It follows from \eqref{total_var1} that:
    \begin{align}
    (II_t^{u_i}) & \leq \; \Gc_N^i \; := \;  2 \int_0^T \int_I \left( \frac{G(u_i,v)}{\|G(u_i,\cdot)\|_1} - \frac{G_N(u_i,v)}{\|G_N(u_i,\cdot)\|_1} \right)\int_{\R^d}|x|^2(\I_P(x) -\I_N(x) ))\mu_s^v(\d x)\d v\d s,    \label{interII}
    \end{align}
    where we denote by $\I_B$ the indicator function of the set $B$. 
  
    \noindent {\it Step 3: Final estimates.} 
    By summing \eqref{Xgron1} over $i$ in $A_N$, dividing by $N$, and using \eqref{Xgron1}, \eqref{inter12}, \eqref{interII}, we obtain by Gronwall lemma: 
    \begin{align} \label{interXIII} 
     \frac{1}{N} \sum_{i\in A_N} \E\Big[\sup_{t \in[0,T]}|X_t^{i,N}-X_t^{u_i}|^2\Big]  & \leq 
    \underbrace{\frac KN\sum_{i\in A_N}(\Ec_N^i(1)+\Ec_N^i(2))}_{\Ec_N} + o_N(1) + K \frac{1}{N} \sum_{i\in A_N} \Gc_N^i.
    \end{align}
    By Step 1, we have that 
    $\Ec_N$ goes to zero as $N$ goes to infinity. 
    Let us now check that the last term  in \eqref{interXIII} converges to zero as $N$ goes to infinity. 
    To see this, we consider the following operator norm $\|G\|_{\infty\to 1}$:
    \begin{align}
    \|G\|_{\infty\to 1} & := \;  \sup_{\|g\|_\infty \le 1} \int_I\Big|\int_I G(u,v)g(v)\d v \Big|\d u,
    \end{align} 
    and we recall from \cite[Lemma 8.11]{lovasz_large_2012} that  if $\|G-G_N\|_{\square}\to 0$, then  $\|G-G_N\|_{\infty\to 1}\to 0$. 
    Let us observe that:
\begin{align}
    \Gc_N^i & = \int_0^T\int_I\int_{\R^d}\left( \frac{G(u_i,v)}{\|G(u_i,\cdot)\|_1}-\frac{G(u_i,v)}{\|G_N(u_i,\cdot)\|_1}+\frac{G(u_i,v)}{\|G_N(u_i,\cdot)\|_1} - \frac{G_N(u_i,v)}{\|G_N(u_i,\cdot)\|_1} \right)\\
        &\qquad\qquad |x|^2(\I_P(x)-\I_N(x))\mu_s^v(\d x)\d v\d s\\
        & = \int_0^T\int_I\int_{\R^d}\frac{G(u_i,v)(\|G_N(u_i,\cdot)\|_1-\|G(u_i,\cdot)\|_1)}{\|G(u_i,\cdot)\|_1\|G_N(u_i,\cdot)\|_1}|x|^2(\I_P(x)-\I_N(x))\mu_s^v(\d x)\d v\d s\\
        &\qquad+\int_0^T\int_I\int_{\R^d} \frac{G(u_i,v)-G_N(u_i,v)}{\|G_N(u_i,\cdot)\|_1}|x|^2(\I_P(x)-\I_N(x))\mu_s^v(\d x)\d v\d s\\
        & =: \Gc_N^i(1) + \Gc_N^i(2), 
\label{GN12}
\end{align}
Thus, 
\begin{align*}
    \frac 1 N\sum_{i\in A_N} \Gc_n^i &\le \; \frac 1 N\left|\sum_{i\in A_N}\Gc_N^i(1)\right| + \frac 1 N \left|\sum_{i\in A_N}\Gc_N^i(2)\right|.
\end{align*}
We start with the convergence of $\frac 1N\left|\sum_{i\in A_N}\Gc_N^i(2)\right|$:
\begin{align}
    &\frac 1N\left|\sum_{i\in A_N}\Gc_N^i(2)\right| \\
    &= \frac 1N\left|N\sum_{i\in A_N}\int_{I_i}\frac{1}{\Vert G_N(u,\cdot)\Vert_1}\int_I\left(G\left(\frac{\lceil Nu\rceil}{N},v\right)-G_N(u,v)\right)\underbrace{\int_0^T\int_{\R^d}|x|^2(\I_P(x)-\I_{\tilde P}(x))\mu_s^v(\d x)\d s}_{=:\beta(v)}\d v\d u \right|\\
    &\le \left| \int_{\tilde A_N}\frac{1}{\Vert G_N(u,\cdot)\Vert_1}\int_I\left(G\left(\frac{\lceil Nu\rceil}{N},v\right)-G(u,v)\right)\beta(v)\d v\d u \right|\\
    &\qquad\qquad + \left| \int_{\tilde A_N}\frac{1}{\Vert G_N(u,\cdot)\Vert_1}\int_I\left(G\left(u,v\right)-G_N(u,v)\right)\beta(v)\d v\d u \right| =: |\Gc_N(2,1)| + |\Gc_N(2,2)|.
\end{align}
For $\Gc_N(2,1)$ we write:
\begin{align}
    |\Gc_N(2,1)|&\le K\int_{\tilde A_N}\frac{1}{\Vert G_N(u,\cdot)\Vert_1}\int_I\left|G\left(\frac{\lceil Nu\rceil}{N},v\right)-G(u,v)\right|\d v\d u\\
    &\le K\int_{\tilde A_N}\frac{1}{\Vert G_N(u,\cdot)\Vert_1}\d u\sup_{u\in I}\int_I\left|G\left(\frac{\lceil Nu\rceil}{N},v\right)-G(u,v)\right|\d v\\
    &\le K\sup_{u\in I}\int_I\left|G\left(\frac{\lceil Nu\rceil}{N},v\right)-G(u,v)\right|\d v\xrightarrow[N\to\infty]{} 0,
\end{align}
where we have used \eqref{hyp:sup-N-degree} in the third inequality and the final convergence follows from the uniform continuity of $G$ in Assumption \ref{hyp:uni-cont}.
To prove the convergence of $|\Gc_N(2,2)|$, observe that, since Assumption \ref{Hyp:Gpos}(ii) holds, we can find a sequence of simple (hence bounded) functions $\{g_M\}$ such that, for each $M$, $0\le g_M(u)\le \frac{1}{\Vert G(u,\cdot)\Vert_1}$ $\forall u\in I$ and
\[
\int_I \left(\frac{1}{\Vert G(u,\cdot)\Vert_1} - g_M(u)\right)\d u < \frac{1}{M}.
\]  
We now write, for an arbitrary $M\in\N$:
\begin{align}
    |\Gc_N(2,2)| &\le K\int_{\tilde A_N}\frac{1}{\Vert G(u,\cdot)\Vert_1}\left| \int_I\left(G(u,v)-G_N(u,v)\right)\beta(v)\d v\right|\d u\\
    &\le K\int_{\tilde A_N}\left(\frac{1}{\Vert G(u,\cdot)\Vert_1}-g_M(u)\right)\left| \int_I\left(G(u,v)-G_N(u,v)\right)\beta(v)\d v\right|\d u\\
    &\quad + K\int_{\tilde A_N}g_M(u)\left| \int_I\left(G(u,v)-G_N(u,v)\right)\beta(v)\d v\right|\d u\\
    &\le K\int_{\tilde A_N}\left(\frac{1}{\Vert G(u,\cdot)\Vert_1}-g_M(u)\right)\d u + K(M)\int_I\left| \int_I\left(G(u,v)-G_N(u,v)\right)\beta(v)\d v\right|\d u\\
    &\le \frac KM + K(M)\Vert G-G_N\Vert_\square\xrightarrow[N\to\infty]{} \frac KM,
\end{align}
where in the first inequality we use the fact that $\frac{1}{\Vert G_N(u,\cdot)\Vert_1}\le \frac{2}{\Vert G(u,\cdot)\Vert_1}$ for $u\in\tilde A_N$. In the second last inequality $K(M)$ is a constant that may depend on $M$, but not on $N$.
Taking now $\lim_{M\to\infty}$, we obtain the desired result.
We now turn to the convergence of $\frac 1 N\left|\sum_{i\in A_N}\Gc_N^i(1)\right|$:
    \begin{align*}
        &\frac 1 N\left|\sum_{i\in A_N}\Gc_N^i(1)\right|\le \frac 1N\sum_{i\in A_N}|\Gc_N^i(1)|\\
        \le & \;  \frac 1N \sum_{i\in A_N}\frac{\big|\|G_N(u_i,\cdot)\|_1-\|G(u_i,\cdot)\|_1\big|}{\Vert G_N(u_i,\cdot)\Vert_1}\int_0^T\int_I\frac{G(u_i,v)}{\|G(u_i,\cdot)\|_1}\int_{\R^d}|x|^2\mu_s^v(\d x)\d v\d s\\
        \le & \;  \frac KN \sum_{i\in A_N}\frac{\big|\|G_N(u_i,\cdot)\|_1-\|G(u_i,\cdot)\|_1\big|}{\Vert G_N(u_i,\cdot)\Vert_1}\\
        = &  \frac 1N \sum_{i\in A_N}\left|N\int_{I_i}\int_I \frac{(G_N(u,v)-G(\frac{\lceil Nu\rceil}{N},v))}{\Vert G_N(u,\cdot)\Vert_1}\d v\d u\right|,
    \end{align*}
    and the rest of the proof is similar to what was done for $\frac 1N\left|\sum_{i\in A_N}\Gc^i_N(2)\right|$.

\end{proof}

\subsection{Propagation of chaos}

If the limit graphon $G$ satisfies both some degree condition (Assumption \ref{Hyp:Gposuni}), and some regularity assumption (Assumption \ref{hyp:lipmu0}), then the finite particle system can be controlled trajectory per trajectory. The rate can be computed and turns out to be optimal, i.e., the same of the mean-field behavior \cite{fournier-guillin}.

\begin{Theorem} \label{theoPOC}
Let Assumptions \ref{Hyp:bsig}, \ref{Hyp:Gposuni},  \ref{ass:initial-condition} and \ref{hyp:lipmu0} hold.  Suppose that $G_N(u_i,u_j)$ $=$ $G(u_i,u_j)$, $i,j$ $\in$ $\llbracket 1, N\rrbracket$,  and  $X^{i,N}_0$ $=$ $\zeta^{u_i}$, 
$i$ $\in$ $\llbracket 1,N\rrbracket$. 
Then,  there exists some positive constant $K>0$, depending on $d,\epsilon$, such that 
\beqs
\sup_{i \in \llbracket 1,N\rrbracket} \E \Big[ \sup_{t \in [0,T]} \big| X_t^{i,N} - X_t^{u_i} \big|^2\Big] &\leq & KM_N. 
\enqs
\end{Theorem}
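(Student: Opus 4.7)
The plan is to couple each particle $X^{i,N}$ with the limit process $X^{u_i}$ through the common Brownian motion $W^{u_i}$ and common initial condition $\zeta^{u_i}$, and then trace through the standard Sznitman-type argument while carefully handling the graphon structure. First, I would apply BDG together with the Lipschitz property of $b,\sigma$ in Assumption \ref{Hyp:bsig} to obtain, for each $i\in\llbracket1,N\rrbracket$ and $t\in[0,T]$,
\begin{align}
\E\bigl[\sup_{s\le t}|X_s^{i,N}-X_s^{u_i}|^2\bigr] \le K\int_0^t \E\bigl[\sup_{r\le s}|X_r^{i,N}-X_r^{u_i}|^2\bigr]\,\d s + K\int_0^t \E\bigl[\Wc_2^2([G_N\bodelta_s^N]^{u_i},[G\bomu_s]^{u_i})\bigr]\,\d s,
\end{align}
so everything reduces to controlling the Wasserstein term uniformly in $i$.

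Next I would split the Wasserstein term by the triangle inequality through the two intermediate measures $[G_N\hat\bodelta_s]^{u_i}$ and $[G_N\hat\bomu_s]^{u_i}$, and treat the three resulting pieces separately. For the coupling piece $\Wc_2^2([G_N\bodelta_s^N]^{u_i},[G_N\hat\bodelta_s]^{u_i})$, using the natural coupling $(X^{j,N}_s,X^{u_j}_s)$ and writing $\|G_N(u_i,\cdot)\|_1=N_i/N$ gives the deterministic bound by $\frac{1}{N_i}\sum_j\xi_{ij}^N \E[|X_s^{j,N}-X_s^{u_j}|^2]\le \sup_j\E[\sup_{r\le s}|X_r^{j,N}-X_r^{u_j}|^2]$. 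For the Fournier-Guillin piece $\Wc_2^2([G_N\hat\bodelta_s]^{u_i},[G_N\hat\bomu_s]^{u_i})$, Lemma \ref{lemFG} together with the uniform bound $\|G_N(u_i,\cdot)\|_1^{-1}\le G_\infty^{-1}$ from Assumption \ref{Hyp:Gposuni} yields an expectation bounded by $C\sqrt{G_\infty^{-1}}\,M_N$ uniformly in $i$. The third, deterministic, piece $\Wc_2^2([G_N\hat\bomu_s]^{u_i},[G\bomu_s]^{u_i})$ is the one specific to this proposition: invoking the identity $G_N(u_i,u_j)=G(u_i,u_j)$, the blockwise Lipschitz property of $G$ (Assumption \ref{hyp:lipmu0}) and the blockwise Lipschitz property $\Wc_{2,T}(\mu^u,\mu^v)\le\kappa|u-v|$ granted by Proposition \ref{prop-continuity}(2), I would rewrite both measures as averages of the $\mu_s^v$'s against the normalised kernels $G(u_i,v)/\|G(u_i,\cdot)\|_1$ and the piecewise-constant approximation $G(u_i,u_{\lceil Nv\rceil})/\|G_N(u_i,\cdot)\|_1$, use convexity of $\Wc_2^2$ in the measure argument to reduce to bounding the $L^1$ discrepancy of the two kernels plus a weighted integral of $\Wc_2^2(\mu_s^{u_{\lceil Nv\rceil}},\mu_s^v)$, and conclude that this piece is of order $1/N^2$ plus a boundary contribution of order $1/N$ coming from the finitely many blocks $J_i$ whose boundary mesh element crosses a jump of the Lipschitz pieces.

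Combining the three estimates yields
\begin{align}
\E\bigl[\sup_{s\le t}|X_s^{i,N}-X_s^{u_i}|^2\bigr] \le K\int_0^t\sup_{j}\E\bigl[\sup_{r\le s}|X_r^{j,N}-X_r^{u_j}|^2\bigr]\d s + K M_N + \frac{K}{N},
\end{align}
and taking the supremum over $i$ on the left-hand side followed by Gronwall's lemma gives the desired bound, after observing that $1/N$ is dominated by $M_N$ in each of the three regimes listed in Lemma \ref{lemFG}. The main technical obstacle is the third, deterministic piece: although conceptually it is only a discretisation error, the quotient structure of $[G\boldsymbol{\mu}]^u$ forces one to control simultaneously the numerator $\int_I G(u_i,v)\mu_s^v\,\d v$ and the denominator $\|G(u_i,\cdot)\|_1$ against their Riemann-sum analogues, and to absorb the finitely many block boundaries without losing the $1/N$ rate; here the uniform lower bound $\|G(u_i,\cdot)\|_1\ge G_\infty>0$ from Assumption \ref{Hyp:Gposuni} is essential, as it prevents the denominators from degenerating and keeps the constants uniform in $i$.
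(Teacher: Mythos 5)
Your proposal is correct and follows essentially the same route as the paper: the same Gronwall reduction, the same triangle-inequality decomposition through $[G_N\hat\bodelta_s]^{u_i}$ and $[G_N\hat\bomu_s]^{u_i}$, the same appeal to Lemma \ref{lemFG} with the uniform degree bound from Assumption \ref{Hyp:Gposuni}, and the same Lipschitz-discretisation argument (via Assumption \ref{hyp:lipmu0} and Proposition \ref{prop-continuity}(2)) for the deterministic remainder. The only cosmetic difference is that you merge the paper's two terms $\Ec_N^i(2)$ and $\Gc_N^i$ into a single piece $\Wc_2^2([G_N\hat\bomu_s]^{u_i},[G\bomu_s]^{u_i})$, which you bound by the same $O(1/N)$ and correctly observe is dominated by $M_N$.
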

\begin{proof}
As in  the proof of Theorem \ref{theoLLN}, see \eqref{Xgron1},  \eqref{inter12} and \eqref{interII},  we have for all $i$ $\in$ $\llbracket 1,N\rrbracket$
\begin{equation} \label{POCinter1}
\begin{split}
\E\Big[\sup_{s\in[0,t]}|X_s^{i,N}-X_s^{u_i} |^2\Big] \le & K \int_0^t \sup_{i \in \llbracket 1,N\rrbracket } 
\E\Big[\sup_{r\in[0,s]}|X_r^{i,N}-X_r^{u_i} |^2\Big] \d s\\
&+ K \Ec_N(1) + K \Ec_N(2)  + K\Gc_N,  
\end{split}
\end{equation} 
where $\Ec_N(1) := \max_i \Ec_N^i(1)$, $\Ec_N(2) := \max_i \Ec_N^i(2)$ and $\Gc_N := \max_i\Gc_N^i$.
Since $G_N(u_i, u_j) = G(u_i, u_j)$ and $\sup_{u\in I} \Vert G(u, \cdot) \Vert_1^{-1} < \infty$ from Assumption \ref{Hyp:Gposuni}, we can assume that there exists a constant $K$ such that for $N$ large enough and for all $i\in \llbracket1, \dots, N\rrbracket$, $\Vert G_N(u_i, \cdot) \Vert_1 N = N_i\ge KN$.
Thus, from the estimation \eqref{estimFG1} in Lemma \ref{lemFG}, 
we have $\Ec_N(1)$ $\leq$ $KM_N$. 
Under Assumption \ref{hyp:lipmu0}, we have from Proposition \ref{prop-continuity} that $\Ec_N(2)\leq K/{N^2}$.
On the other hand, we have that:
\begin{align*}
    \Gc_N^i & \leq \;  
    K \int_I \bigg| \frac{G(u_i,v)}{\|G(u_i,\cdot)\|_1} - \frac{G_N(u_i,v)}{\|G_N(u_i,\cdot)\|_1} \bigg|\d v \\
    & = \; 
    K \int_I \frac{1}{\|G(u_i,\cdot)\|_1}|G(u_i,v)- G_N(u_i,v)|\d v \\
    &\qquad\qquad +  
    K \int_I \frac{G_N(u_i,v)}{\|G(u_i,\cdot)\|_1\|G_N(u_i,\cdot)\|_1} \big|\|G_N(u_i,\cdot)\|_1 - \|G(u_i,\cdot)\|_1\big|\d v \\
    & = :  \Gc^i_N(1) + \Gc^i_N(2).
\end{align*}
Now, since $G_N(u_i,v) = \sum_{j,k=1}^N G(u_j,u_k)1_{I_j}(u_i)1_{I_k}(v)$, we can write 
\begin{align*}
\Gc_N^i(1) & := \; K \int_I   \frac{1}{\|G(u_i,\cdot)\|_1}| G(u_i,v) - G_N(u_i,v) | \d v \\
& = \;  \frac{K}{\|G(u_i,\cdot)\|_1} \int_I 
\bigg| \sum_{j,k=1}^N (G(u_i,v) - G\big(u_j,u_k\big))1_{I_j}(u_i)1_{I_k}(v) \bigg| \d v \\
    & \le \;  \frac{K}{\|G(u_i,\cdot)\|_1} \int_I \sum_{j,k=1}^N\bigg| G(u_i,v)-G\big(u_j,u_k\big) \bigg|1_{I_j}(u_i)
    1_{I_k}(v) \d v  \\
    & \le \;  \frac{K}{\|G(u_i,\cdot)\|_1} \int_I \sum_{j,k}^N \frac{1}{N}1_{I_j}(u_i)1_{I_k}(v) \d v 
    \; \le \;  \frac{K}{N},
\end{align*}
where we use in the second inequality the Lipschitz condition on $G$ in Assumption \ref{hyp:lipmu0}, and the fact that $\sup_{u\in I}\|G(u,\cdot)\|_1^{-1}<\infty$ in Assumption \ref{Hyp:Gposuni}. On the other hand,  we have 
\begin{align*}
\Gc_N^i(2) &:= \; K \int_I \frac{G_N(u_i,v)}{\|G(u_i,\cdot)\|_1\|G_N(u_i,\cdot)\|_1} \big|\|G_N(u_i,\cdot)\|_1 - \|G(u_i,\cdot)\|_1\big|\d v \\
& = K \int_I \frac{G_N(u_i,v)}{\|G(u_i,\cdot)\|_1\|G_N(u_i,\cdot)\|_1}  
\bigg| \int_I (G(u_i,w) - G_N(u_i,w))\d w \bigg| \d v \\
& \le \;  K \int_I \frac{G_N(u_i,v)}{\|G(u_i,\cdot)\|_1\|G_N(u_i,\cdot)\|_1} 
\sum_{j=1}^N \int_{I_j}|G(u_i,w) - G(u_i,u_j)|\d v \\
& \le \;   K \int_I \frac{G_N(u_i,v)}{\|G(u_i,\cdot)\|_1\|G_N(u_i,\cdot)\|_1} 
\sum_{j=1}^N\int_{I_j}\frac{1}{N} \d v \; \leq  \;  \frac{K}{N}.
\end{align*}
We deduce that $\Gc_N^i$ $\leq$ $K/N$ for all $i$ $\in$ $\llbracket 1,N\rrbracket$, and we conclude with \eqref{POCinter1} by Gronwall lemma.

\end{proof}

\appendix 

\section{Appendix} \label{secappen:L2}

\subsection{Stochastic integral in the Fubini extension}

  Let $\Phi\in L^2_\boxtimes(\Omega\times I; \Cc_d )$. 
  We define the stochastic integral as the following stochastic process for $(\omega,u) \in \Omega\times I$
\[
I_t^\Phi(\omega,u) := \Bigg( \int_0^t \Phi^u_s \d W^u_s\Bigg)(\omega),
\]
where $\int_0^t\Phi_t^u \d W^u_t$ is the classical integral with respect to the Brownian motion $W^u$.
In the following we will consider the filtration $\F$ generated by $W$, i.e. $\Fc_t := \sigma(W_s, s\le t)$.

Let us prove that $I^\Phi \in L^2_\boxtimes(\Omega\times I; \Cc_d)$. 
We start by checking the measurability. Since $\Phi\in L^2_\boxtimes(\Omega\times I)$, we can find a sequence of simple processes $\{\Phi^n\}$ approximating it in the sense that 
\[
 \int_I\E\bigg[\int_0^T |\Phi^{n,u}_t - \Phi^u_t|^2dt\bigg]\lambda(\d u)\to 0.
\]
A simple process takes the form 
\[
\Phi^n_t(\omega, u) = \sum_{i=0}^{n-1} \phi^i(\omega,u)\I_{[t_i,t_i+1)}(t),
\] 
where $\phi^i$ is $\F_{t_i}$-measurable and is $L^2_\boxtimes(\Omega\times I)$. The integral of $\Phi^n$ is then defined by
\[
I^{\Phi^n}_t(\omega,u) := \sum_{i=0}^{n-1} \phi^i(\omega,u)(W_{t_{i+1}}(\omega,u)-W_{t_i}).
\]
Obviously, $I^{\Phi^n}_t$ is $\Fc\boxtimes\Ic$-measurable. Because of the $L^2$ convergence, there exists a null set $\Nc \in\Ic$ such that for all $u\in I\setminus \Nc$ it holds that (up to a subsequence)
\[
\E\bigg[ \int_0^T |\Phi_t^{n,u}-\Phi_t^u|^2 \d t \bigg]\to 0
\]
and thus
\[
\E\Bigg[\bigg|{\int_0^t \phi^{n,u}_s \d W^u_s - \int_0^t \phi_s^u \d W^u_s}\bigg|^2 \Bigg] \to 0.
\]
Following the approach of \cite{yor}, for $u\in \Nc^c$ we define the following functions:
\begin{align*}
    &n_{0}(u) := 1\\
    &n_{k}(u) := \inf \Bigg\{ m>n_{k-1}(u) \text{ s.t. } \sup_{p,q\ge m} \P\Big( \Big| \int_0^t \Phi_s^{p,u} \d W^u_s - \int_0^t \Phi^{q,u}_s \d W^u_s \Big|> 2^{-k} \Big)< 2^{-k} \Bigg\}.
\end{align*}
Observe that $n_{k}$ is $\Ic$-measurable for all $k\in\N$. 
Thus, we set 
\[
Y_k^u(\omega) := \int_0^t\Phi^{n_{k}(u),u}_s \d W^u_s,
\]
and observe that $Y$ is $\Fc\boxtimes\Ic$-measurable since the composition of measurable functions. It is easy to see that for all $u\in \Nc^c$, $Y^u_k$ converges $\P$-a.s. The stochastic integral is defined for $(\omega, u)$ as
\[
\Bigg( \int_0^t \Phi^u_s \d W^u_s \Bigg)(\omega) := 
\begin{cases}
    0   &\text{if $(\omega,u)\in A^c\cup(\Omega\times\Nc)$}\\
    \lim_{k\to\infty} Y_k^u(\omega)  &\text{if $(\omega,u)\in A\setminus (\Omega\times\Nc)$}
\end{cases}
\]
where $A:=\{ (\omega,u) \text{ s.t } \lim_{k\to\infty}Y^u_k(\omega) \text{ exists and is finite } \}$.

We show that $A^c\in\Fc\boxtimes\Ic$. In fact
\begin{align*}
A^c &= \{ (\omega,u) \text{ s.t. } \exists\epsilon>0, \forall K\in\N, \exists p,q\in\N, p,q\ge K : | Y^u_p(\omega) - Y^u_q(\omega) | >\epsilon \}\\
&= \{ (\omega,u) \text{ s.t. } \exists\epsilon>0, \forall K\in\N, \exists p,q\in\N, p,q\ge K : \\
&\qquad\qquad\qquad\qquad\qquad\qquad\qquad\qquad\qquad\qquad \Big| \int_0^t\Phi^{n_{p}(u),u}_s \d W^u_s - \int_0^t\Phi^{n_{q}(u),u}_s \d W^u_s\Big| >\epsilon \}\\
&= \bigcup_{\epsilon\in\Q_+}\bigcap_{K\in\N}\bigcup_{p,q\ge K} \Big\{ \Big| \int_0^t\Phi^{n_{p}(u),u}_s \d W^u_s - \int_0^t\Phi^{n_{q}(u),u}_s \d W^u_s\Big| >\epsilon \Big\},
\end{align*}
and the sets $\Big\{ \Big| \int_0^t\Phi^{n_{p}(u),u}_s \d W^u_s - \int_0^t\Phi^{n_{q}(u),u}_s \d W^u_s\Big| >\epsilon \Big\}\in\Fc\boxtimes\Ic$. We can then deduce that $\Bigg( \int_0^t \Phi^u_s \d W^u_s \Bigg)(\omega)$ is $\Fc\boxtimes\Ic$-measurable.

The integrability property is ensured by observing that
\[
\E^\boxtimes[\sup_{t\le T}|I^\Phi_t|^2] = \int_I\E\Bigg[ \sup_{t\le T}\Big| \int_0^t\Phi^u_s \d W^u_s \Big|^2 \Bigg]\lambda(du) \le \int_I C\E\Bigg[ \int_0^T |\Phi^u_s|^2 \d s \Bigg]\lambda(\d u) < \infty,
\]
where we have used the Burkholder-Davis-Gundy inequality and where $C$ is a constant depending only on the exponent $2$ (and not on $u\in I$).

\subsection{Proof of Proposition \ref{p:existence-uniqueness-graphonsysgen}} \label{sec:appenexis}

The proof of Proposition \ref{p:existence-uniqueness-graphonsysgen} is somehow standard: originally presented in \cite{sznitman_topics_1991}, it is based on a fixed point argument which yields existence (and uniqueness) of a solution. We closely follow the proof given in \cite{bayraktar_graphon_2023} without giving all the details. Consider the mapping 
\begin{equation}
    \begin{split}
        \Phi: \Mc(\Cc_d) &\to \Mc(\Cc_d)\\
        \boldsymbol{\nu} &\mapsto \Phi(\boldsymbol{\nu}) := \Lc(\boX^{\boldsymbol\nu}),
    \end{split}
\end{equation}
where $\Lc(\boX^{\boldsymbol{\nu}}): I \to \Pc_2(\Cc_d), u\mapsto \Lc(X^{\boldsymbol\nu,u})$ and $\Lc(X^{\boldsymbol\nu,u})$ is the law of the solution of the system (with frozen probability measures $\boldsymbol\nu$)  
\begin{align}
\label{freeze-dynboX}
    \d \boX^{\boldsymbol{\nu}}_t &= \; \mfb(\boX^{\boldsymbol{\nu}}_t,\boldsymbol{\nu}_t) \d t + \mfs(\boX^{\boldsymbol{\nu}}_t,\boldsymbol{\nu}_t) \d \boldsymbol{W}_t, \quad 0 \leq t\leq T, \; 
    \boX^{\boldsymbol{\nu}}_0 = \bozeta.
\end{align}
Equation \eqref{freeze-dynboX} can be rewritten as a classical system of SDEs on $\R^d$
\begin{equation} \label{freeze-graphonsysgen}
\begin{cases}
\d X_t^{u,\boldsymbol{\nu}} &= \;  b\big(X^{u,\boldsymbol{\nu}}_t, [G\boldsymbol{\nu}_t]^u \big)  \d t \; + \;  \sigma\big(X^{u,\boldsymbol{\nu}}_t, [G\boldsymbol{\nu}_t]^u \big) \; \d W_t^u, \\
X_0^{u,\boldsymbol{\nu}} & = \zeta^u, \quad \text{for $\lambda$-a.e. } u \in I, 
\end{cases}
\end{equation}
which has progressively measurable, Lipschitz continuous and bounded coefficients. Given Assumption \ref{Hyp:bsig}, it is well-known that there is a unique time-continuous $\F$-adapted solution $(X^{u, \boldsymbol{\nu}})_{u \in I}\in L^2_\boxtimes(\Omega\times I; \Cc_d)$. 
It is then not difficult by following the same arguments as in \cite{bayraktar_graphon_2023} to see that the map $\Phi$ is well defined, i.e. $\Phi(\boldsymbol\nu)\in\Mc(\Cc_d)$.


Now, let us  prove that $\Phi$ is a contraction with respect to the metric $\bd$, recall \eqref{def:Mc-distance}. Following a classical contraction argument (\cite[Proof of Lemma 1.3]{sznitman_topics_1991}), we need to prove that there exists a constant $C>0$ such that
\[
\sup_{u\in I}\Wc^2_{2,t}(\Phi(\boldsymbol\mu)^u,\Phi(\boldsymbol\nu)^u) \le C\int_0^t \sup_{u\in I}\Wc^2_{2,r}(\mu^u,\nu^u) \d r. 
\]
Since $\Wc^2_{2,t}(\Phi(\boldsymbol\mu)^u,\Phi(\boldsymbol\nu)^u) \le \E\Big[\sup_{0\le s\le t} |X^{\boldsymbol\mu,u}_t - X^{\boldsymbol\nu, u}_t|^2\Big]$, it suffices to control the corresponding trajectories
\begin{align*}
    \E\bigg[ \sup_{s\le t} |X^{\boldsymbol\mu, u}_s - X^{\boldsymbol\nu, u}_s|^2 \bigg] &\le 2\underbrace{\E\bigg[ \int_0^t |b(X^{\boldsymbol\mu, u}_r, [G\boldsymbol\mu_r]^u) - b(X^{\boldsymbol\nu, u}_r, [G\boldsymbol\nu_r]^u)|^2 \d r\bigg]}_{\mathbf I} +\\
    &\quad + 2\underbrace{\E\bigg[ \sup_{s\le t} \bigg| \int_0^s (\sigma(X^{\boldsymbol\mu, u}_r, [G\boldsymbol\mu_r]^u) - \sigma(X^{\boldsymbol\nu, u}_r, [G\boldsymbol\nu_r]^u)) \d W^u_r \bigg|^2 \bigg]}_{\mathbf{II}}.
\end{align*}
From the Lipschitz properties of the coefficients and by Lemma \ref{lemma:graphon-operator}, we have
\begin{align*}
    \mathbf{I} &\le L \E\bigg[ \int_0^t (|X^{\boldsymbol\mu, u}_r - X^{\boldsymbol\nu,u}_r|^2 + \sup_{u\in I}\Wc_2^2([G\boldsymbol\mu_r]^u,[G\boldsymbol\nu_r]^u)) \d r \bigg]\\
    &\le L\E\bigg[ \int_0^t (|X^{\boldsymbol\mu, u}_r - X^{\boldsymbol\nu,u}_r|^2 + \sup_{u\in I}\Wc_2^2(\boldsymbol\mu_r^u,\boldsymbol\nu^u_r)) \d r \bigg],
\end{align*}
and similarly
\begin{align*}
    \mathbf{II} &\le \E\bigg[ \int_0^t |\sigma(X^{\boldsymbol\mu, u}_r, [G\boldsymbol\mu_r]^u) - \sigma(X^{\boldsymbol\nu, u}_r, [G\boldsymbol\nu_r]^u)|^2 \d r \bigg]\\
    &\le L \E\bigg[ \int_0^t (|X^{\boldsymbol\mu, u}_r - X^{\boldsymbol\nu,u}_r|^2 + \sup_{u\in I}\Wc_2^2([G\boldsymbol\mu_r]^u,[G\boldsymbol\nu_r]^u)) \d r \bigg]\\
    &\le L\E\bigg[ \int_0^t (|X^{\boldsymbol\mu, u}_r - X^{\boldsymbol\nu,u}_r|^2 + \sup_{u\in I}\Wc_2^2(\boldsymbol\mu_r^u,\boldsymbol\nu^u_r)) \d r \bigg].
\end{align*}
Fubini's theorem yields that
\begin{align*}
\E\Big[\sup_{0\le s\le t} |X^{\boldsymbol\mu,u}_t - X^{\boldsymbol\nu, u}_t|^2\Big] &\le C\int_0^t \E\big[|X^{\boldsymbol\mu, u}_r - X^{\boldsymbol\nu,u}_r|^2\big]dr + C\int_0^t \sup_{u\in I}\Wc_2^2(\boldsymbol\mu_r^u,\boldsymbol\nu^u_r) \d r\\
&\le C\int_0^t \E\big[\sup_{r'\le r}|X^{\boldsymbol\mu, u}_{r'} - X^{\boldsymbol\nu,u}_{r'}|^2\big]dr + C\int_0^t \sup_{u\in I}\Wc_2^2(\boldsymbol\mu_r^u,\boldsymbol\nu^u_r) \d r,
\end{align*}
and by Gronwall inequality we obtain
\[
\E\Big[\sup_{0\le s\le t} |X^{\boldsymbol\mu,u}_t - X^{\boldsymbol\nu, u}_t|^2\Big] \le C\int_0^t \sup_{u\in I}\Wc_2^2(\boldsymbol\mu_r^u,\boldsymbol\nu^u_r) \d r.
\]
As the estimate of $\Wc^2_{2,t}(\Phi(\boldsymbol\mu)^u,\Phi(\boldsymbol\nu)^u)$ is uniform in $u$, we conclude
\[
\sup_{u\in I}\Wc^2_{2,t}(\Phi(\boldsymbol\mu)^u,\Phi(\boldsymbol\nu)^u) \le C\int_0^t \sup_{u\in I}\Wc^2_{2,r}(\mu^u,\nu^u) \d r.
\]
The proof of the moment estimate follows from the Lipschitz property of the coefficients and from Gronwall inequality. Indeed, let us fix $t\in[0,T]$ and consider:
\begin{align*}
    \sup_{u\in I}\E\big[ \sup_{0\le s\le t}|X_s^u|^{2+\epsilon} \big]& \le \sup_uK\E\big[ \big(|X^u_0|^{2+\epsilon} +\int_0^t |b(X^u_r,[G\bomu_r]^u)|^{2+\epsilon}\d r \int_0^t|\sigma(X^u_r,[G\bomu_r]^u)|^{2 +\epsilon}\d r\big)\big]\\
    & \le \sup_u K\E\big[|X^u_0|^{2 +\epsilon}+ \int_0^t(|b(X^u_r,[G\bomu_r]^u)-b(0,[G\bodelta_0]^u)|^{2+\epsilon}+|b(0,[G\bodelta_0]^u)|^{2+\epsilon})\d r\\
    &\qquad + \int_0^t(|\sigma(X^u_r,[G\bomu_r]^u)-\sigma(0,[G\bodelta_0]^u)|^{2+\epsilon}+|\sigma(0,[G\bodelta_0]^u)|^{2+\epsilon})\d r \big],
\end{align*}
where $\bodelta_0^u = \delta_0$ for all $u\in I$. Note that we also have $[G\bodelta_0]^u = \|G(u,\cdot)\|_1^{-1}\int_IG(u,v)\delta_0\d v = \delta_0$. Thus, thanks to the Lipschitz properties of $b$ and $\sigma$:
\begin{align*}
    \sup_{u\in I}\E\big[ \sup_{0\le s\le t}|X_s^u|^{2+\epsilon} \big] & \le \sup_uK\E\big[ 1 + |X^u_0|^{2+\epsilon} + \int_0^t|X^u_r|^{2+\epsilon}\d r\big] + \sup_u\int_0^t\Wc_2^{2+\epsilon}([G\bomu_r]^u,[G\bodelta_0]^u)\d r\\
    & \le \sup_uK\E\big[ 1 + |X^u_0|^{2+\epsilon} + \int_0^t|X^u_r|^{2+\epsilon}\d r \big] +  \int_0^t\sup_u\Wc_2^{2+\epsilon}(\mu_r^u,\delta_0^u)\d r\\
    & \le \sup_uK\E\big[ 1+|X^u_0|^{2+\epsilon}\big] + \int_0^t\sup_u\E\big[|X^u_r|^{2+\epsilon}\big]\d r\\
    & \le \sup_u K\E\big[ 1+|X^u_0|^{2+\epsilon}\big] + \int_0^t\sup_u\E\big[\sup_{s\le r}|X^u_s|^{2+\epsilon}\d r.
\end{align*}
Thus, from Gronwall inequality we obtain
\begin{align*}
    \sup_{u\in I}\E\big[\sup_{0\le s\le t}|X_s^u|^{2+\epsilon}\big] \le K(1 + \sup_{u\in I}\E[|X^u_0|^{2+\epsilon}])<\infty.
\end{align*}

\subsection{Proof of Proposition \ref{prop-continuity}.}
\label{sec:appencont}

We can argue as in \cite{bayraktar_graphon_2023} (proof of Theorem $2.1$), and consider the following:
\begin{align*}
    &\tilde X^u_t = \tilde X^u_0 + \int_0^t b(\tilde X^u_s, [G\tilde\bomu_s]^u) \d s + \int_0^t \sigma(\tilde X^u_s, [G\tilde\bomu_s]^u) \d W_s, \qquad \tilde\bomu_s^u = \Lc(\tilde X^u_s),\\
    & \tilde X^v_t = \tilde X^v_0 + \int_0^t b(\tilde X^v_s, [G\tilde\bomu_s]^v) \d s + \int_0^t \sigma(\tilde X^v_s, [G\tilde\bomu_s]^v) \d W_s, \qquad \tilde\bomu_s^u = \Lc(\tilde X^u_s),
\end{align*}
where $W$ is a common Brownian motion independent of $\tilde X^u_0,\tilde X^v_0$, and $\Lc(\tilde X^u_0)=\mu^u_0$, $\Lc(\tilde X^v_0)=\mu^v_0$. From Proposition \ref{p:existence-uniqueness-graphonsysgen} we have that $\Lc(\tilde X^u) = \mu^u$ and $\Lc(\tilde X^v)=\mu^v$. 

Using the Lipschitz properties of $b,\sigma$ we have that: 
\begin{align*}
    &\E\big[ \sup_{s\le t} |\tilde X_s^u-\tilde X_s^v|^2 \big] \le\\
    & \qquad\le K\E\big[ |\tilde X^u_0-\tilde X^v_0|^2 \big] + K\int_0^t\E\big[ \sup_{r\le s}|\tilde X^u_r-\tilde X^v_r|^2 \big]\d s + K\int_0^t \Wc_2^2([G\boldsymbol\mu_s]^u,[G\boldsymbol\mu_s]^v)\d s,
\end{align*}
and by Gronwall's inequality, we get
\[
\E\big[ \sup_{s\le t} |\tilde X_s^u-\tilde X_s^v|^2 \big] \le K\E\big[ |\tilde X^u_0-\tilde X^v_0|^2 \big] +  K\int_0^t \Wc_2^2([G\boldsymbol\mu_s]^u,[G\boldsymbol\mu_s]^v)\d s.
\]
Now, from Theorem $6.15$ in \cite{villani_optimal_2009} we argue that:
\begin{align*}
\Wc_2([G\boldsymbol\mu_s]^u,[G\boldsymbol\mu_s]^v) \le&
2\int_{\R^d} |x|^2\big| [G\boldsymbol\mu_s]^u-[G\boldsymbol\mu_s]^v \big|(\d x)\\
=& 2\int_I \bigg| \frac{G(u,w)}{\|G(u,\cdot)\|_1} - \frac{G(v,w)}{\|G(v,\cdot)\|_1}\bigg| \int_{\R^d} |x|^2\mu_s^w(\d x)\d w.
\end{align*}
Taking the infimum over all random variables $\tilde X^u_0,\tilde X^v_0$ such that $\Lc(\tilde X^u_0)=\mu^u_0$ and $\Lc(\tilde X^v_0)=\mu^v_0$, we obtain:
\[
\Wc_{2,t}^2(\mu^u,\mu^v) \le K\Wc_2^2(\mu^u_0,\mu^v_0) + \int_I \bigg| \frac{G(u,w)}{\|G(u,\cdot)\|_1} - \frac{G(v,w)}{\|G(v,\cdot)\|_1}\bigg| \int_{\R^d} |x|^2\mu_s^w(\d x)\d w.
\]
In an analogous way as in the proof of Theorem \ref{theoLLN}, we obtain $(1)$ and $(2)$, using respectively Assumption \ref{hyp:uni-cont} and \ref{hyp:lipmu0}.

\section*{Acknowledgements}
The authors would like to thank Michele Coghi and the two anonymous referees for helpful comments and remarks.

\bibliographystyle{plain}
\bibliography{bibliography-2}

\end{document}